\definecolor{green}{HTML}{006400}
\DeclareMathAlphabet{\mathbbold}{U}{bbold}{m}{n}
\newlist{mylist}{enumerate}{5}
\setlist[mylist]{label=\arabic*.}
\newlist{mylist2}{enumerate}{5}
\setlist[mylist2]{label=\alph*.}
\newlist{mylist3}{enumerate}{5}
\setlist[mylist3]{label=\roman*.}
\def\E{{\mathbb E}}
\def\T{{\mathsf T}}
\def\w{{\boldsymbol w}}
\def\H{{\boldsymbol H}}
\def\v{{\boldsymbol v}}
\def\d{{\boldsymbol d}}
\def\h{{\boldsymbol h}}
\def\bphi{{\boldsymbol \phi}}
\def\bpsi{{\boldsymbol \psi}}
\def\bzeta{{\boldsymbol \zeta}}
\renewcommand{\qed}{\hfill\IEEEQED}
\newtheorem{theorem}{Theorem}
\newtheorem{fact}{Fact}
\newtheorem{assumption}{Assumption}
\newtheorem{remark}{Remark}
\begin{document}
{
\title{Adaptive Penalty-Based Distributed \\Stochastic Convex Optimization}}
%
%
%

\author{Zaid~J.~Towfic,~\IEEEmembership{Student~Member,~IEEE}
        and~Ali~H.~Sayed,~\IEEEmembership{Fellow,~IEEE}
\thanks{The authors are with Department of Electrical Engineering,
University of California, Los Angeles, CA 90095. Email: \{ztowfic, sayed\}@ucla.edu. A short version of this work appears in the conference presentation \cite{towfic2013adaptive}.
This work was supported in part by NSF grant CCF-1011918.
}%
}
\maketitle

\begin{abstract}
In this work, we study the task of distributed optimization over a network of learners in which each learner possesses a convex cost function, a set of affine equality constraints, and a set of convex inequality constraints. We propose a fully-distributed adaptive diffusion algorithm based on penalty methods that allows the network to cooperatively optimize the global cost function, which is defined as the sum of the individual costs over the network, subject to all constraints. We show that when small constant step-sizes are employed, the expected distance between the optimal solution vector and that obtained at each node in the network can be made arbitrarily  small. Two distinguishing features of the proposed solution relative to other related approaches is that the developed strategy does not require the use of projections and is able to adapt to and track drifts in the location of the minimizer due to changes in the constraints or in the aggregate cost itself. The proposed strategy is also able to cope with changing network topology, is robust to network disruptions, and does not require global information or rely on central processors. 
\end{abstract}

\begin{IEEEkeywords}
distributed processing, constrained optimization, penalty method, diffusion strategies, consensus strategies, adaptation and learning. 
\end{IEEEkeywords}

\ifCLASSOPTIONpeerreview
\begin{center} \bfseries EDICS Categories: 60. NET-DISP, 77. OPT-DOPT, 63. NET-ADEG\end{center}
\fi
%
\IEEEpeerreviewmaketitle

\allowdisplaybreaks

\section{Introduction}
\label{sec:introduction}
%
%
%
%
\IEEEPARstart{D}{istributed} convex optimization refers to the task of minimizing the aggregate sum of convex cost functions, each available at an agent of a connected network, subject to convex constraints that are also distributed across the agents. The key challenge in such problems is that each agent is only aware of its cost function and its constraints. This article proposes a {\em fully} decentralized solution that is able to minimize the aggregate cost function while satisfying all distributed constraints. The solution method is based solely on local cooperation among neighboring nodes and does not rely on the use of projection constructions. Furthermore, the individual nodes do not need to know any of the constrains besides their own.

There have been several useful studies on distributed convex optimization and estimation techniques in the literature  \cite{Barbarossa,PaoloAlgebraic,bertrandSPM,lee2013distributed,
                      tsitsiklis1986distributed,bertsekas1989parallel,kar2008distributed,theodoridis2011adaptive,Takahashi2008139,Djuric,cui2007estimation,hu2011adaptive,
                      ChouvardasAdaptiveRobust2011,dini2012cooperative,Li20082599,Krishnamurthy,boyd2006randomized,
                      boyd2011distributed,palomar2010convex,yan,Nedic}. Most existing techniques are suitable for the solution of {\em static} optimization problems, where the objective is to determine the location of a fixed optimal parameter. The available solution methods tend to employ constructions that become problematic in the context of adaptation and learning over networks. This is because they rely on the use of decaying step-sizes in their stochastic gradient updates \cite{Nedic,yan,lee2013distributed,tsitsiklis1986distributed}. And it is well-known that decaying step-sizes are a hindrance to adaptation when it is desired to develop {\em dynamic} or adaptive solutions that are able to track {\em drifts} in the location of the optimal parameter; these drifts can result from changes in the constraint conditions or in the cost functions themselves. For this reason, in this work, we employ {\em constant} step-sizes in order to enable continuous adaptation and learning.
                      
	When constant step-sizes are used, the dynamics of the distributed algorithm is changed in a nontrivial manner and its convergence analysis becomes more demanding because, as we are going to see, the gradient update term does not die out anymore  with time as happens with decaying step-size implementations. In the constant step-size case, gradient noise will always be present and will seep into the update equations. Nevertheless, we will be able to show that the proposed distributed strategy  can still ensure approximation errors of the order of the step-size so that arbitrarily small levels of accuracy can be attained by using sufficiently small step-sizes (see Theorem \ref{thm:alg_error}).

We further note that most available distributed solutions rely on the use of projection steps in order to ensure that the successive estimates at the nodes satisfy the convex constraints \cite{lee2013distributed,SrivastavaDistributed2011,palomar2010convex,yan,Nedic}. In some of the methods \cite{Nedic,yan}, each node is required to know all the constraints across the entire network in order to compute the necessary projections. Clearly, this requirement defeats the purpose of a distributed solution since it requires the nodes to have access to global information. The works \cite{lee2013distributed,SrivastavaDistributed2011} develop useful distributed solutions where nodes are only required to know their own constraints. However, the constraint conditions still need to be relatively simple in order for the distributed algorithm to be able to compute the necessary projections analytically (such as projecting  onto the nonnegative orthant).  In cases when the constraints are more complex so that the necessary projections are not easily computed, then several of the existing techniques tend to implement an {\em offline} optimization routine that is guaranteed to converge only asymptotically, and not in a finite number of steps, as explained in  \cite{cvx_book,Polyak}. The analysis for these methods generally assumes that the projection step is implemented ideally even though the offline iterations are in fact truncated in practice and the truncation errors interfere with the accuracy of the distributed solution. 

Motivated by the above considerations, in this work, we propose a distributed solution that employs constant step-sizes and that eliminates the need for projection steps. The solution relies instead on the use of suitably chosen {\em penalty} functions and replaces the projection step by a stochastic approximation update that is made to run {\em simultaneously} with the optimization step. The challenge is to show that the use of  penalty functions in the stochastic gradient update step still leads to accurate solutions. The analysis in the article establishes that this is indeed possible. In particular, we show following Theorem \ref{thm:alg_error} further ahead how to select the parameters of the proposed algorithm in order to ensure desirable convergence properties with small approximation errors. Moreover, in the proposed solution, the nodes are only required to interact locally and to have access to local estimates from their neighbors; there is no need for the nodes to know any of the constrains besides their own.

One important issue that is useful to mention is that some solution methods (e.g., \cite{Nedic}) require a {\em feasible} initial condition for their distributed algorithm. When the constraint set is distributed across the agents, it is not possible to find such feasible initial conditions without a substantial amount of in-network communication. We therefore take a different approach. By relying on suitably defined stochastic approximation steps, we show how the weight estimates constructed by the various nodes will approach the optimal feasible solution with arbitrarily good precision.

	  The technique used in this work relies on the use of diffusion strategies, which have been proven to have useful convergence and learning properties \cite{Neurocomputing,Jianshu_diff_wo,shine_diffusion_consensus,sayedMagazine}. The algorithm is comprised of three steps: 1) an adaptation step that updates the current solution using the local stochastic gradient available at the current iteration; 2) a constraint penalty step that penalizes directions that are not feasible according to the local constraint set; 3) and an aggregation step in which each agent combines its solution estimate with that of its \emph{network neighbors}. In this way, the only communication that takes place in the algorithm is in-network and relatively low-power since neighbors are usually (but not necessarily) chosen according to physical proximity. 
	  
	  \noindent {\bf Notation}. Throughout the manuscript, random quantities are denoted in boldface. Matrices are denoted in capital letters while vectors and scalars are denoted in small-case letters. The operator $\preceq$  denotes an element-wise inequality; i.e., $a \preceq b$ implies that each pair of elements of the vectors $a$ and $b$ satisfy $a_i \leq b_i$.

\section{Background: Augmentation Methods}
\label{sec:background_augmentation}
In this section, we briefly review a basic technique in constrained deterministic optimization and highlight some of the issues that are relevant to distributed implementations and that need attention. Specifically, we describe  augmentation-based methods for constrained optimization. These methods generally fall into two categories: (1) barrier methods, also known as interior penalty methods, and (2) penalty methods, also known as exterior penalty methods. Both methods are based on a simple yet insightful technique to augment the original objective function with a ``penalty'' term that penalizes getting too close to the constraint from the interior of the feasible set or leaving the feasible region altogether. 

Thus, consider a convex optimization problem of the form:
\begin{alignat}{3}
	&\min_w\  &&J(w) \label{eq:original_objective}\\
	&\;\textrm{subject\ to\ }\  &&g_{l}(w) \leq 0,\quad\quad l=1,2,\ldots,L \nonumber
\end{alignat}
where $w\in\mathbb{R}^M$, $\{g_1(w),\ldots,g_L(w)\}$ is a collection of convex functions, and $J(w)$ is a strongly convex function from $\mathbb{R}^M$ to $\mathbb{R}$. Augmentation incorporates the inequality constraints into the cost function and helps transform the constrained optimization problem into an unconstrained optimization problem via a \emph{convex} barrier or penalty function $\delta(\cdot): \mathbb{R}\rightarrow\mathbb{R}$, in the following manner:
\begin{align}
	\min_w\  J(w) + \eta \sum_{l=1}^L \delta(g_l(w))
	\label{eq:aumented_cost}
\end{align}
where $\eta > 0$ is a scalar parameter that controls the relative importance of adhering to the constraints. One choice for $\delta(\cdot)$ that yields an equivalent problem to \eqref{eq:original_objective} for any finite $\eta > 0$ is the indicator function \cite[pp.~562--563]{cvx_book}:
	\begin{align}
		\delta^\textrm{IF}(x) = \begin{cases}
										\ \ \!0, & x \leq 0\\
										\infty, & \textrm{otherwise}
								   \end{cases}
								   \label{eq:indicator_function}
	\end{align}
Observe that the indicator function $\delta^\textrm{IF}(x)$ is convex and nondecreasing. Since the indicator function is generally nondifferentiable, approximations are used in its place. The main difference between barrier methods and penalty methods is the choice of the approximating functions.
\subsection{Barrier Method}
\label{ssec:barrier}
	Barrier methods set a ``barrier'' around the feasible region. One of the most popular smooth approximations for \eqref{eq:indicator_function} is the logarithmic barrier function:
	\begin{align}
		\delta^{\log}(x) = \begin{cases}
							-\log(-x), & x < 0\\
							\quad\quad\quad\ \infty, &\textrm{otherwise}
						  \end{cases} \label{eq:log_barrier}
	\end{align}
	In this case, the algorithm requires a strictly feasible initialization, so that the augmented cost given in \eqref{eq:aumented_cost} is finite. A gradient-descent optimization algorithm would then travel against the gradient of \eqref{eq:aumented_cost}, while adjusting the step-size to ensure that the next iterate stays within the feasible region via a line-search algorithm \cite[p.~464]{cvx_book} \cite[p.~288]{Polyak}. Barrier methods are \emph{interior-point methods} since the iterates never leave the feasible-set. Clearly, this is an advantage since any solution obtained during the optimization process may be used as a sub-optimal approximation. Nevertheless, this advantage requires a strictly feasible initialization. When the entire constraint set $\{g_1(w),\ldots,g_L(w)\}$ is not available to an agent (as happens in {\em distributed} constrained optimization), then it is not possible to choose a strictly feasible initializer without sharing this global information with the agents. This situation creates an annoying disadvantage from the perspective of distributed optimization. We will see that penalty methods avoid this difficulty.

\subsection{Penalty Method}
\label{sec:penalty_method}
	In contrast to barrier methods, penalty methods give some positive penalty to solutions that fall outside the feasible set. In this case, the inequality penalty function takes the form:
	\begin{align}
		\delta^\textrm{IP}(x) &= \begin{cases}
						0, &x \leq 0\\
						>0, &\textrm{otherwise}
					 \end{cases}
					 \label{eq:inequality_penalty_condition}
	\end{align}
	One continuous, convex, and twice-differentiable choice that satisfies \eqref{eq:inequality_penalty_condition} is:
	\begin{align}
		\delta^\textrm{SIP}(x) &= \max(0,x^3) \label{eq:SIP} 
	\end{align}
	Observe that $\delta^\textrm{SIP}(x)$ does not assume unbounded values for bounded $x$ and, therefore, penalty methods do not require a feasible solution as an initializer. While this fact implies that penalty methods are particularly well-suited for distributed optimization scenarios, it also follows that the iterates may not remain inside the feasible region in general. This property means that there is no longer a need to execute a linesearch backtracking algorithm in gradient-descent implementations and, therefore, the step-sizes may assume {\em constant} values throughout the execution of the algorithm. The use of constant step-sizes is advantageous for a couple of reasons. First, it allows us to reduce the number of free parameters in the algorithm. Second, it becomes possible to derive useful bounds on the performance of the algorithm. And, perhaps more importantly, constant step-sizes endow the resulting distributed algorithm with adaptation and learning abilities. In this way, the algorithm acquires the ability to track in real-time variations in the underlying constraints and in the location of the minimizer. In comparison, diminishing step-sizes are problematic because once these step-sizes approach their zero limiting value, the algorithm stops adapting.
	
	For penalty methods, we observe that the approximation \eqref{eq:inequality_penalty_condition} of \eqref{eq:indicator_function} improves in quality as $\eta$ increases in value \cite[p.~366]{Bazaraa} \cite[p.~288]{Polyak}. This is because the penalty on the inside of the feasible region is zero and does not increase as $\eta$ is increased. However, as $\eta\rightarrow\infty$, the function $\eta\cdot \delta^\textrm{IP}(x)$ approximates the ideal barrier \eqref{eq:indicator_function}. At that stage, expression \eqref{eq:aumented_cost} would have the shape of the original cost function over the feasible set, and the effective objective would be infinite outside the feasible set. Since $J(w)$ and the penalty function, $\delta^\textrm{IP}(g_l(w))$, are convex, the augmented cost is also convex and its minimizer is obtained at the optimizer of the original optimization problem as  $\eta\rightarrow\infty$ \cite[p.~366]{Bazaraa}.
	
	Another advantage of penalty methods, as opposed to barrier methods, is that it is possible to easily incorporate {\em affine} constraints as well. Thus, consider the convex optimization problem:
	\begin{alignat}{3}
		&\min_w\ \ &&J(w) \label{eq:original_equality}\\
		&\textrm{subject\ to\ }\  &&h_{u}(w) = 0,\quad\quad u=1,2,\ldots,U \nonumber\\
		&					    &&g_{l}(w) \leq 0,\quad\quad\ l\,=1,2,\ldots,L \nonumber
	\end{alignat}
	where the functions $h_u(w)$ are affine. This problem can also be approached as an unconstrained optimization problem using penalty functions:
	\begin{align}
	\min_w\ \!J(w) + \eta\left[\sum_{l=1}^L \delta^\textrm{IP}(g_l(w)) + \sum_{u=1}^U \delta^\textrm{EP}(h_u(w))\right]
	\label{eq:augmented_cost_equality}
	\end{align}
	where $\delta^\textrm{IP}(\cdot)$ is described in \eqref{eq:inequality_penalty_condition} while $\delta^\textrm{EP}(\cdot): \mathbb{R}\rightarrow\mathbb{R}$ is a convex function that is described by 
	\begin{align}	
		\delta^\textrm{EP}(x) = \begin{cases}
			0, & x=0\\
			>0, & x\neq 0
		\end{cases}
		\label{eq:equality_penalty_condition}
	\end{align}
	One popular choice of a continuous, convex, and twice-differentiable equality penalty function that satisfies \eqref{eq:equality_penalty_condition} is the quadratic penalty:
	\begin{align}
		\delta^\textrm{SEP}(x) &= x^2 \label{eq:SEP}
	\end{align}
	Clearly, since the penalty functions are convex and the original objective function is strongly convex, the augmented cost \eqref{eq:augmented_cost_equality} remains strongly convex. Moreover, when \eqref{eq:original_equality} is feasible, the minimizer of \eqref{eq:augmented_cost_equality} tends to the optimal solution of the original problem \eqref{eq:original_equality} as $\eta\rightarrow \infty$ (see Theorem \ref{thm:approx_error}). This shows that it is possible to tackle both equality and inequality constraints simultaneously using penalty methods. Table \ref{tbl:Barrier_Penalty} lists the advantages and disadvantages of the barrier and penalty methods for the distributed optimization problem under study.
	
\begin{table*}
\caption{Table listing the advantages and disadvantages of the barrier and penalty methods for distributed constrained optimization}
\label{tbl:Barrier_Penalty}
\centering
{\begin{tabular}{c|c|c|c|c|c}
	\hline 
	\textbf{Method} & \textbf{Feasible Start}  & \textbf{Incorporate Equality Constraints} & \textbf{Full Knowledge of Feasible Set} & \textbf{Iterates Feasible} & \textbf{Constant Step-size}\\ 
	\hline \hline
	\textbf{Barrier} & Required & Indirectly & Required & Guaranteed & No (Backtracking)\\ 
	\hline 
	\textbf{Penalty} & Not Required & Directly & Not Required & Asymptotically & Yes\\ 
	\hline 
	\end{tabular} }	
\end{table*}

	In the next section, we will examine how penalty methods can be effectively used in distributed convex optimization algorithms to obtain the solution of the original optimization problem \eqref{eq:original_equality} without explicitly communicating the constraints across the agents in the network.
	
\section{Constrained Optimization over Networks}
Consider a network of agents (nodes), where each node $k$ possesses a strongly convex cost function, $J_k(w)$, and a convex set of constraints $w\in\mathbb{W}_k$ where $w\in\mathbb{R}^M$. The objective of the network is to optimize the aggregate cost across all nodes subject to all constraints, i.e.,
\begin{alignat}{3}
	&\!\min_w\ &&J^\textrm{glob}(w) \triangleq \sum_{k=1}^N J_k(w) \label{eq:orig_dist_problem}\\
	&\textrm{subject\ to\ \ }&& w \in \mathbb{W}_1, \ldots,w\in\mathbb{W}_N\nonumber
\end{alignat}
Each of the convex sets $\{\mathbb{W}_1,\ldots,\mathbb{W}_N\}$ is defined as the set of points $w$ that satisfy a collection of affine equality and convex inequality constraints:
\begin{align}
	\mathbb{W}_k \triangleq \left\{w: \begin{aligned}
								h_{k,u}(w) = 0, & \quad\quad u=1,\ldots,U_k\\
								g_{k,l}(w) \leq 0, & \quad\quad l=1,\ldots,L_k
							\end{aligned} \right.
\end{align}
Obviously, the original optimization problem \eqref{eq:orig_dist_problem} can be cast as the optimization of the aggregate cost function $J^\textrm{glob}(w)$ over the common feasible set, $\mathbb{W}_1 \cap \ldots \cap \mathbb{W}_N$:
\begin{alignat}{3}
	&\!\min_w\ && J^\textrm{glob}(w)\quad \textrm{subject\ to\ \ } w \in \mathbb{W}
\end{alignat}
where $\mathbb{W} \triangleq \mathbb{W}_1 \cap \ldots \cap \mathbb{W}_N$ is a convex set since the intersection of convex sets is itself convex \cite[p.~36]{cvx_book}. Assuming a solution for the above deterministic optimization problem exists (i.e, $\mathbb{W} \neq \emptyset$), we will denote an optimal solution for it by $w^\star$. The optimal objective value is given by $J^\textrm{glob}(w^\star)$. Observe that since $J^\textrm{glob}(w)$ is strongly-convex, then $w^\star$ is unique  (see Fact \ref{fact:uniqueSolution} further ahead).\\

\noindent \begin{remark} {\em Although we are requiring the individual cost functions $J_k(w)$ to be strongly convex, this condition is actually unnecessary and it is sufficient to require that at least one of the individual costs is strongly convex while all other costs can simply be convex; this condition is sufficient to ensure that the aggregate cost $J^{\rm glob}(w)$ will remain strongly convex. Most of the results in this manuscript, and especially the convergence results and the conclusions of Facts \ref{fact:uniqueSolution} and \ref{fact:approx_unique} and Theorems \ref{thm:approx_error}-\ref{thm:alg_error}, will hold under these weaker conditions --- see the explanation given in Remark \ref{rem:remark2} in Appendix \ref{app:alg_error} following equation \eqref{eq:bound_stability2}. The strong convexity of the individual costs is adopted here for three reasons. First, the more relaxed situation would require more technical  arguments to arrive at the same conclusions, as shown in \cite{chen2012limiting} in a different context. Due to space limitations, we opt to illustrate our construction under the strong convexity condition to facilitate the exposition of the main conclusions without digressing into specialized situations. Second, strong convexity is satisfied in many applications involving adaptation and learning where it is common to incorporate regularization into the cost functions. Regularization automatically ensures strong convexity. Third, when strong convexity is not satisfied, the Hessian matrices of the individual costs can become close-to-singular and ill-conditioned, which is known to be problematic for real-time implementations using streaming data.} \hfill\qed
\end{remark}

Returning to \eqref{eq:orig_dist_problem}, using the cost-augmentation technique described in Sec.~\ref{sec:background_augmentation}, we approximate \eqref{eq:orig_dist_problem} by using penalty functions in a manner similar to \eqref{eq:augmented_cost_equality}. Specifically, we consider the unconstrained problem:
\begin{align}
\min_w\ J^\textrm{glob}_{\eta}(w) \label{eq:global_cost_approx}
\end{align}
where
\begin{align}
J^\textrm{glob}_{\eta}(w)&\triangleq \sum_{k=1}^N J_k(w) + \eta \sum_{k=1}^N p_k(w)\label{eq:J^approx}
\end{align}
and 
\begin{align}
p_k(w) \triangleq \sum_{l=1}^{L_k} \!\delta^\textrm{IP}\!(g_{k,l}(w)) \!+ \sum_{u=1}^{U_k} \!\delta^\textrm{EP}\!(h_{k,u}(w)) \label{eq:p}
\end{align}
with $\delta^\textrm{IP}(x)$ and $\delta^\textrm{EP}(x)$ denoting continuous convex functions that satisfy \eqref{eq:inequality_penalty_condition} and \eqref{eq:equality_penalty_condition}, respectively. We assume that $\delta^\textrm{IP}(x)$ and $\delta^\textrm{EP}(x)$ are selected so that $\nabla_w p_k(w') = 0$ when $w' \in \mathbb{W}$ (this is the case, for example, for \eqref{eq:SIP} and \eqref{eq:SEP}). We stress that \eqref{eq:global_cost_approx} is {\em{not}} an equivalent problem to \eqref{eq:orig_dist_problem} when the indicator function \eqref{eq:indicator_function} is not utilized, but is an approximation for it. We will see later though that the approximation improves as $\eta \rightarrow \infty$. When $J^\textrm{glob}(w)$ is strongly convex, the cost \eqref{eq:global_cost_approx} will also be strongly-convex and will have a unique optimizer for any $\eta > 0$ (see Fact \ref{fact:approx_unique} further ahead). We shall denote this optimal solution to \eqref{eq:global_cost_approx} by $w^o(\eta)$, which is parameterized in terms of $\eta$. Our task is now two-fold: (1) to motivate a fully distributed algorithm to solve \eqref{eq:global_cost_approx} and determine $w^{o}(\eta)$, and (2) to characterize the distance between $w^o(\eta)$ and the desired optimizer $w^{\star}$ of  \eqref{eq:orig_dist_problem}. The distributed solution that we develop will rely solely on local in-network processing with each agent having knowledge of only its own constraint set $\mathbb{W}_k$. We will establish after Theorem \ref{thm:alg_error} in the sequel that by choosing the algorithm's parameters appropriately, it is possible to obtain an arbitrarily accurate approximation for $w^{\star}$. 


\subsection{Diffusion-Based Distributed Optimization}
	Consider the optimization problem given by \eqref{eq:global_cost_approx}. Its aggregate cost can be expressed as the sum of local cost functions as follows:
	\begin{align}
		J^{\textrm{glob}}_{\eta}(w) \triangleq \sum_{k=1}^N J_{k,\eta}'(w)
		\label{eq:J^glob'}
	\end{align}
	where
	\begin{align}
	J_{k,\eta}'(w) \triangleq J_k(w) \!+\! \eta \cdot p_k(w)
	\label{eq:J_k'}
	\end{align}
	and $p_k(w)$ is defined in \eqref{eq:p}. Observe that each function $J_{k,\eta}'(w)$ depends only on agent $k$'s information: cost function $J_k(w)$ and constraint set $\mathbb{W}_k$. This situation falls within the framework of unconstrained diffusion optimization \cite{Jianshu_common_wo,Jianshu_diff_wo}. Following similar arguments to those employed in these references, we conclude that one way to seek the minimizer of \eqref{eq:J^glob'} is for each node to run iterations of the following form with a constant step-size:
	\begin{subequations}
	\begin{align}
		\psi_{k,i} &= w_{k,i-1} - \mu \cdot \nabla_w J_{k,\eta}'(w_{k,i-1}) \label{eq:A_0}\\
		w_{k,i} &= \sum_{\ell=1}^N a_{\ell k} \psi_{\ell,i} \label{eq:C_0}
	\end{align}
	\end{subequations}
	In \eqref{eq:A_0}-\eqref{eq:C_0}, the vector $w_{k,i-1}$ denotes the estimate for $w^o(\eta)$ at node $k$ at iteration $i-1$. This iterate is first updated via the (adaptive) gradient-descent update \eqref{eq:A_0} with step-size $\mu > 0$ to the intermediate value $\psi_{k,i}$. All other nodes in the network perform a similar update simultaneously by using their gradient vectors. Subsequently, each node $k$ uses \eqref{eq:C_0} to combine, in a convex manner, the intermediate estimates from its neighbors. This step results in the updated estimate $w_{k,i}$ and the process repeats itself. The nonnegative coefficients $\{a_{\ell k}\}$ are chosen to satisfy the conditions:
	\begin{subequations}
	\begin{align}
		\!\!a_{\ell k} &= 0,\quad \textrm{when agents\ } \ell \textrm{\ and\ } k \textrm{\ are not neighbors} \label{eq:conditions_a_1}\\
		\!\!\sum_{\ell=1}^N a_{\ell k} &= 1, \quad k = 1,\ldots,N \label{eq:conditions_a_2}
	\end{align}
	\end{subequations}
	If we collect these coefficients into a matrix $A=[a_{\ell k}]$, then condition \eqref{eq:conditions_a_2} implies that $A$ is left-stochastic (i.e., it satisfies $A^\T \mathds{1}_N = \mathds{1}_N$, where $\mathds{1}_N \in \mathbb{R}^N$ is the vector with all entries equal to one). 
	
	Evaluating the gradient vector from \eqref{eq:J_k'} and substituting into \eqref{eq:A_0} we get:
	\begin{align}
		\psi_{k,i} &= w_{k,i-1} - \mu\cdot \nabla_w J_k(w_{k,i-1}) - \mu \eta \cdot \nabla_w p_k(w_{k,i-1}) \label{eq:psi}
	\end{align}
	for differentiable penalty functions. Expression \eqref{eq:psi} indicates that the update from $w_{k,i-1}$ to $\psi_{k,i}$ involves two components: the original gradient vector, $\nabla_w J_k(\cdot)$, and the gradient vector of the penalty function.  We can incorporate these update terms into $w_{k,i-1}$ in various orders. One convenient way to express the update is to split it into two parts: first we move from $w_{k,i-1}$ to $\psi_{k,i}$ in the opposite direction of the gradient vector of $J_k(\cdot)$. Subsequently, we incorporate the correction by the penalty gradients, say, as follows:
	\begin{subequations}
	\begin{align}
		\zeta_{k,i} &= w_{k,i-1} - \mu \cdot \nabla_w J_k(w_{k,i-1}) \label{eq:zeta}\\
		\psi_{k,i} &= \zeta_{k,i} - \mu\eta\cdot \nabla_w p_k(w_{k,i-1}) \label{eq:psi_2}
	\end{align}
	\end{subequations}
	It is generally expected that the intermediate iterate $\zeta_{k,i}$ generated by \eqref{eq:zeta} is a better estimate for $w^o(\eta)$ than $w_{k,i-1}$. This motivates us to replace $w_{k,i-1}$ in \eqref{eq:psi_2} by $\zeta_{k,i}$ to get:
	\begin{align}
		\zeta_{k,i} &= w_{k,i-1} - \mu \cdot \nabla_w J_k(w_{k,i-1}) \label{eq:zeta_orig}\\
		\psi_{k,i} &= \zeta_{k,i} - \mu\eta\cdot \nabla_w p_k(\zeta_{k,i}) \label{eq:psi_orig}
	\end{align} 
	This last substitution is reminiscent of incremental-type arguments in gradient descent algorithms \cite{bertsekas1997new,nedic2001incremental,rabbat2005quantized}. We further observe from \eqref{eq:p} that the gradient vector of the penalty function can in turn be decomposed into the sum of two gradient components: one arising from the inequality constraints and the other from the equality constraints. Thus, in principle, we can further split \eqref{eq:psi_orig} into two steps by adding these two gradient components one at a time. We shall forgo this extension here since \eqref{eq:zeta_orig}--\eqref{eq:psi_orig} is sufficient to convey the idea behind the main construction in this article. Further splitting of the gradient updates can generally help improve the performance of the distributed algorithm; this study can be pursued using techniques similar to those used by \cite{jaewoo}.

	Now, combining \eqref{eq:zeta_orig}--\eqref{eq:psi_orig} with \eqref{eq:C_0}, we arrive at what we shall refer to as the {\em penalized} Adapt-then-Combine (ATC) diffusion algorithm shown in Eqs. \eqref{eq:ATC_A}--\eqref{eq:ATC_C},
	\begin{algorithm}
		\caption{Diffusion Adapt-then-Combine (ATC)} 
		 \begin{subequations}
			\begin{align}
				\zeta_{k,i} &= w_{k,i-1} - \mu \cdot\nabla_w J_k(w_{k,i-1}) \label{eq:ATC_A}\\
				\psi_{k,i} &= \zeta_{k,i} - \mu\eta\cdot \nabla_w p_k(\zeta_{k,i})\label{eq:ATC_P}\\
				w_{k,i} &= \sum_{\ell \in \mathcal{N}_k} a_{\ell k} \psi_{\ell,i} \label{eq:ATC_C}
			\end{align}
		 \end{subequations}
	\end{algorithm}
	where $\mathcal{N}_k$ denotes the neighborhood of node $k$. It is also possible to interchange the order in which steps \eqref{eq:A_0}--\eqref{eq:C_0} are performed, with combination performed prior to adaptation. Following similar arguments to the above, we can motivate the alternative {\em penalized} Combine-then-Adapt (CTA) diffusion algorithm shown in Eqs. \eqref{eq:CTA_C}--\eqref{eq:CTA_P}.
	Observe that in both penalized ATC and CTA algorithms,  there is an explicit step to move along the gradient of the penalty function. This step can be thought of as performing a single incremental ``projection'' step along agent $k$'s constraints \cite[pp.~20-21]{Polyak}. Before we move on to establish the convergence of these distributed strategies for sufficiently small step-sizes, we pause to compare their structure with other related contributions in the literature. 
	\begin{algorithm}
		\caption{Diffusion Combine-then-Adapt (CTA)} 
		 \begin{subequations}
			\begin{align}
				\psi_{k,i-1} &= \sum_{\ell \in \mathcal{N}_k} a_{\ell k} w_{\ell,i-1} \label{eq:CTA_C}\\
				\zeta_{k,i} &= \psi_{k,i-1} - \mu\cdot \nabla_w J_k(\psi_{k,i-1}) \label{eq:CTA_A}\\
				w_{k,i} &= \zeta_{k,i} - \mu\eta\cdot \nabla_w p_k(\zeta_{k,i}) \label{eq:CTA_P}
			\end{align}
		 \end{subequations}
	\end{algorithm}
	\subsection{Comparison with Consensus-Based Constructions}
	We first compare the penalized CTA algorithm \eqref{eq:CTA_C}--\eqref{eq:CTA_P} to the consensus-based algorithm used in \cite{yan} for constrained optimization, and which is reproduced below using our notation: 
	\begin{subequations}
	\begin{align}
		\psi_{k,i-1} &= \sum_{\ell \in \mathcal{N}_k} a_{\ell k} w_{\ell,i-1}\label{eq:consensus_1}\\
		\zeta_{k,i} &= \psi_{k,i-1} - \mu \cdot \nabla_w J_k(w_{k,i-1}) \label{eq:consensus_1_5}\\
		w_{k,i} &= P_{\mathbb{W}_1\cap\ldots\cap\mathbb{W}_N} \left[\zeta_{k,i}\right] \label{eq:consensus_2}
	\end{align}
	\end{subequations}
	where the notation $P_\mathbb{X}[y]$ denotes the operation of projecting the vector $y$ onto the set $\mathbb{X}$:
	\begin{align}
		P_\mathbb{X}[y] \triangleq \underset{x \in \mathbb{X}}{\arg\min}\ \|x-y\| \label{eq:projection}
	\end{align}
	Observe that the gradient vector in \eqref{eq:consensus_1_5} is evaluated at the old iterate, $w_{k,i-1}$, and not at the updated iterate $\psi_{k,i-1}$ as in \eqref{eq:CTA_A}. Moreover, the projection step \eqref{eq:consensus_2} corresponds to  {\em multiple} (in principle, infinite)  iterations of the final step \eqref{eq:CTA_P} of the penalized CTA algorithm and assumes global knowledge of the full feasible set $\mathbb{W}$ by node $k$. This assumption is a hindrance to distributed implementations. Moreover, unless the constraints are simple, the actual projection in \eqref{eq:consensus_2} is usually found via augmentation methods such as the barrier method discussed in Sec.~\ref{ssec:barrier}, and enough iterations need to be executed offline until, for example, the norm of the gradient vector is sufficiently small. We therefore note that the consensus-based implementation \eqref{eq:consensus_1}--\eqref{eq:consensus_2} requires the sharing of global information among all nodes and the algorithm involves two separate time-scales: a slower scale for performing \eqref{eq:consensus_1}--\eqref{eq:consensus_2} and a faster scale for running the multiple iterations that are needed to carry out the projection needed for step \eqref{eq:consensus_2}.

	Furthermore, it has been shown recently in the literature that performing the combination and adaptation steps incrementally, where the updated iterate $\psi_{k,i-1}$ is used in the gradient vector in \eqref{eq:CTA_A}, guarantees network stability in mean-square-error optimization problems  while consensus-based implementations using \eqref{eq:consensus_1_5} can become unstable. The reason is the following. Note that the same weight estimate $\psi_{k,i-1}$ is used on the right-hand side of the diffusion update \eqref{eq:CTA_A}, while different estimates $\{\psi_{k,i-1},w_{k,i-1}\}$ are used on the right-hand side of the consensus update \eqref{eq:consensus_1_5}. This asymmetry can cause an unbounded growth in the state of consensus networks and lead to instability, as explained in \cite{shine_diffusion_consensus}.
	
	For this reason, we shall continue our presentation by focusing on the penalized CTA and ATC diffusion strategies \eqref{eq:ATC_A}--\eqref{eq:ATC_C} and \eqref{eq:CTA_C}--\eqref{eq:CTA_P}.
\subsection{Comparison with Projection-Based Constructions}
Another distributed algorithm is developed in \cite{lee2013distributed}; it relies on a structure similar to the penalized CTA diffusion form albeit with two important differences: step \eqref{eq:CTA_P} is replaced by the local projection step \eqref{eq:Lee_P} shown below and the constant step-size in step \eqref{eq:CTA_A} is replaced by an iteration-dependent step-size in step \eqref{eq:Lee_A}:
\begin{subequations}
	\begin{align}
		\psi_{k,i-1} &= \sum_{\ell \in \mathcal{N}_k} a_{\ell k} w_{\ell,i-1} \label{eq:Lee_C}\\
		\zeta_{k,i} &= \psi_{k,i-1} - \mu(i)\cdot \nabla_w J_k(\psi_{k,i-1}) \label{eq:Lee_A}\\
		w_{k,i} &= P_{\mathbb{W}_k} \left[\zeta_{k,i}\right] \label{eq:Lee_P}
	\end{align}
\end{subequations}
In this solution, each node does not need to know the global constraint set $\mathbb{W}$ and would project only onto agent $k$'s constraint set $\mathbb{W}_k$, as indicated by \eqref{eq:Lee_P}. However, and understandably, each constraint set $\mathbb{W}_k$ is required to consist of ``simple constraints'' whose projections \eqref{eq:Lee_P} can be computed analytically, such as the projection onto the non-negative orthant. As explained earlier, the solution method we propose in this work removes the need for carrying out explicit projection steps such as \eqref{eq:Lee_P}. Moreover, note that step \eqref{eq:Lee_A} utilizes a diminishing step-size, which limits the adaptation ability of the network in tracking drifting constraints and cost functions under {\em dynamic} optimization scenarios. For this reason, we are setting the step-size to a constant value in \eqref{eq:CTA_A}. By doing so, the dynamics of the algorithm changes in a significant manner. For one thing, with a constant step-size, the right-most gradient term in \eqref{eq:Lee_A} would not vanish anymore (because the step-size does not vanish anymore) and the algorithm will continue to adapt indefinitely. It then becomes necessary to examine whether the algorithm would still be able to approach the solution of the optimization problem with high accuracy due to persistent gradient noise. The main results in this paper establish that this is indeed the case for the proposed penalized diffusion implementations.
\section{Analysis Setup and Main Assumptions}
In this section, we study the performance of the penalized algorithms \eqref{eq:ATC_A}-\eqref{eq:ATC_C} and \eqref{eq:CTA_C}-\eqref{eq:CTA_P} in a unified manner. We shall not limit our analysis to deterministic optimization problems, but will consider more general stochastic gradient approximation problems where the true gradient vectors, $\nabla_w J_k(\cdot)$, are replaced by approximations, say, $\widehat{\nabla_w} J_k(\cdot)$. We model the approximate gradient direction as a randomly perturbed version of the true gradient, say, as:
	\begin{align}
		\widehat{\nabla_w} J_k(w) \triangleq \nabla_w J_k(w) + \v_{k,i}(w)
	\end{align}
	where $\v_{k,i}(\cdot)$ is the perturbation vector (or gradient noise). Observe that once we replace $\nabla_w J_k(w)$ by $\widehat{\nabla_w} J_k(w)$, then the variables $\phi$, $\psi$, $\zeta$, and $w$ in the diffusion strategies \eqref{eq:ATC_A}--\eqref{eq:ATC_C} and \eqref{eq:CTA_C}--\eqref{eq:CTA_P} become random variables due to the presence of the random perturbation $\v_{k,i}(\cdot)$. 
	
  In order to treat the two penalized diffusion algorithms (ATC and CTA) within a unified framework, we consider the following general description:
\begin{subequations}
\begin{align}
	\bphi_{k,i-1} &= \sum_{\ell \in \mathcal{N}_k} a_{1,\ell k} \w_{\ell,i-1} \label{eq:diff_C1}\\
		\bzeta_{k,i} &= \bphi_{k,i-1} - \mu \cdot \widehat{\nabla_w} J_k(\bphi_{k,i-1}) \label{eq:diff_A}\\
		\bpsi_{k,i} &= \bzeta_{k,i} - \mu\eta\cdot \nabla_w p_k(\bzeta_{k,i}) \label{eq:diff_P}\\
		\w_{k,i} &= \sum_{\ell \in \mathcal{N}_k} a_{2,\ell k} \bpsi_{\ell,i} \label{eq:diff_C2}
\end{align}
\end{subequations}
where we introduced two sets of nonnegative convex combination coefficients $\{a_{1,\ell k}\}$ and $\{a_{2,\ell k}\}$ that form left-stochastic matrices $A_1$ and $A_2$ and satisfy:
\begin{align}
		a_{1,\ell k} &= 0,\quad\quad \textrm{when\ } \ell \notin \mathcal{N}_k\\
		a_{2,\ell k} &= 0,\quad\quad \textrm{when\ } \ell \notin \mathcal{N}_k
\end{align}
In \eqref{eq:diff_A}, we already replaced the true gradient vector, $\nabla_w J_k(\cdot)$, with an approximation $\widehat{\nabla_w} J_k(\cdot)$, usually evaluated from instantaneous data realizations. For this reason, $\bphi$, $\bpsi$, $\bzeta$, and $\w$ in \eqref{eq:diff_C1}--\eqref{eq:diff_C2} are denoted in boldface to highlight that they are now random variables. In order to recover the ATC algorithm, we set $A_1 = I_N$ and $A_2=A$ and to recover the CTA algorithm we set $A_1=A$ and $A_2=I_N$.

Since the iterate $\w_{k,i}$ generated by \eqref{eq:diff_C2} is random, we shall measure performance by examining the average squared distance between $\w_{k,i}$ and $w^\star$:
\begin{align}
	\limsup_{i\rightarrow\infty}\ \E \|w^\star - \w_{k,i}\|^2
	\label{eq:desired_metric}
\end{align}
Now, using the optimal solution $w^o(\eta)$ of \eqref{eq:global_cost_approx} we can write:
\begin{align}
	\!\!\limsup_{i\rightarrow\infty}\ \!\!\E \|w^\star \!\!\!-\! \w_{k,i}\|^2 \!&=\! \limsup_{i\rightarrow\infty}\ \!\!\E \|\!w^\star \!\!\!-\! w^o\!(\eta) \!\!+\!\! w^o\!(\eta) \!\!-\! \w_{k,i}\|^2 \nonumber\\
	&\!\!\!\!\!\!\!\!\!\!\!\!\!\!\!\!\!\!\!\!\!\!\!\!\!\!\!\!\!\!\!\!\!\!\!\!\!\!\!\!\!\!\!\!\!\!\!\leq 2 \underset{\textrm{Approximation Error}}{\underbrace{\|w^\star - w^o(\eta)\|^2}} + 2 \limsup_{i\rightarrow\infty}\ \E \|w^o\!(\eta) - \w_{k,i}\|^2 \label{eq:error_expansion_1}
\end{align}
We will see later that the approximation error $\|w^\star - w^o(\eta)\|^2$ can be driven to arbitrarily small values as $\eta \rightarrow \infty$. This agrees with the intuition from Sec.~\ref{sec:penalty_method}. After we establish this fact, we shift our attention towards characterizing the second term of the upper bound in \eqref{eq:error_expansion_1} in order to assess how small \eqref{eq:desired_metric} is. 

We now introduce the necessary assumptions for studying the performance of the diffusion strategies and explain how they arise and where they are used in the analysis. These conditions are of the same nature as assumptions regularly used in the broad distributed optimization literature, as indicated by the references given below in the explanations.
\subsection{Main Assumptions}
\begin{assumption}[Feasible problem]
	\label{ass:uniqueness_of_w^star}
	Problem \eqref{eq:orig_dist_problem} is feasible and, therefore, a minimizer $w^{\star}\in \mathbb{W}$ exists.\qed
\end{assumption}
\noindent This is a logical assumption and it simply states that the set $\mathbb{W}_1\cap\ldots\cap\mathbb{W}_N$ is non-empty. This situation is common when analyzing barrier and penalty methods \cite[p.~561]{cvx_book} for solving convex optimization problems.
\begin{assumption}[Individual costs]
\label{ass:Hessian_cost}
	Each cost function $J_k(w)$ has a Hessian matrix that is bounded from below, i.e., there exist $\{\lambda_{k,\min} > 0\}$ such that, for each $k=1,\ldots,N$:
	\begin{align}
		\nabla_w^2 J_k(w) \geq \lambda_{k,\min} I_M \label{eq:strongly_convex_condition1}
	\end{align}
	Furthermore, since the individual costs $J_k(w)$ are strongly convex, there exist $\lambda_{k,\max}>0$ such that 
	\begin{align}
		\nabla_w^2 J_k(w) \leq \lambda_{k,\max} I_M \label{eq:bounded_hessian}
	\end{align}	
	\qed
\end{assumption}
Notice that the bounded Hessian assumption \eqref{eq:bounded_hessian} is a relaxation of the bounded gradient assumption used in earlier studies, e.g., in \cite{yan,Nedic}. Assumption \ref{ass:Hessian_cost} allows the case in which some of the nodes may not possess a cost function that depends on $w$ at all, but that they only enforce constraints with regularization. Observe that when \eqref{eq:strongly_convex_condition1} holds, we have the following facts.
\begin{fact}[Uniqueness of $w^\star$]
	\label{fact:uniqueSolution}
	When Assumption \eqref{ass:uniqueness_of_w^star} and \eqref{eq:strongly_convex_condition1} hold, the optimizer $w^\star$ of \eqref{eq:orig_dist_problem} is unique \cite[p.~217]{fletcher1987practical}. \qed
\end{fact}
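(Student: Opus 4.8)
The plan is to obtain uniqueness from two facts already available: the strong convexity of the aggregate cost $J^{\textrm{glob}}(w)=\sum_{k=1}^N J_k(w)$, and the convexity of the common feasible set $\mathbb{W}=\mathbb{W}_1\cap\cdots\cap\mathbb{W}_N$. First I would confirm that $J^{\textrm{glob}}$ is strongly convex: by \eqref{eq:strongly_convex_condition1}, each Hessian obeys $\nabla_w^2 J_k(w)\geq \lambda_{k,\min}I_M$ with $\lambda_{k,\min}>0$, and summing over $k$ gives $\nabla_w^2 J^{\textrm{glob}}(w)\geq \lambda_{\min}I_M$ with $\lambda_{\min}\triangleq\sum_{k=1}^N\lambda_{k,\min}>0$. (The same conclusion follows if only one individual cost is strongly convex and the rest are merely convex, as noted in the remark following \eqref{eq:orig_dist_problem}.) In particular, $J^{\textrm{glob}}$ satisfies the quadratic strong-convexity bound
\begin{equation}
J^{\textrm{glob}}(\theta a+(1-\theta)b)\leq \theta J^{\textrm{glob}}(a)+(1-\theta)J^{\textrm{glob}}(b)-\frac{\lambda_{\min}}{2}\theta(1-\theta)\|a-b\|^2
\end{equation}
for all $a,b\in\mathbb{R}^M$ and $\theta\in[0,1]$.

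Next I would argue by contradiction. Suppose $w_1^\star$ and $w_2^\star$ are two distinct minimizers of $J^{\textrm{glob}}$ over $\mathbb{W}$; both lie in $\mathbb{W}$ and attain the common optimal value $J^{\textrm{glob}}(w^\star)$, whose existence is guaranteed by Assumption \ref{ass:uniqueness_of_w^star}. Each $\mathbb{W}_k$ is convex, being the solution set of affine equalities and convex inequalities, and the intersection of convex sets is convex, so the midpoint $\bar w\triangleq\tfrac12(w_1^\star+w_2^\star)$ again belongs to $\mathbb{W}$ and is therefore feasible. Applying the displayed inequality with $\theta=\tfrac12$ and using $\|w_1^\star-w_2^\star\|>0$ yields $J^{\textrm{glob}}(\bar w)<\tfrac12 J^{\textrm{glob}}(w_1^\star)+\tfrac12 J^{\textrm{glob}}(w_2^\star)=J^{\textrm{glob}}(w^\star)$, which contradicts the optimality of $w^\star$. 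Hence the minimizer is unique.

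There is no substantive obstacle here: the statement is a textbook consequence of minimizing a strongly convex function over a convex domain. The only steps that merit an explicit line are (i) transferring strong convexity from the individual costs $J_k$ to the aggregate cost $J^{\textrm{glob}}$ through additivity of the Hessians (or, in the relaxed setting, through additivity of convexity and a single strong term), and (ii) invoking convexity of $\mathbb{W}$ so that the midpoint of two candidate optima stays feasible, which is precisely what lets the strict strong-convexity inequality be applied. Existence of $w^\star$ is not reproved in this fact, since it is already postulated in Assumption \ref{ass:uniqueness_of_w^star}.
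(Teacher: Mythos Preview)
Your proposal is correct and follows essentially the same reasoning as the paper. The paper itself does not give a detailed proof but simply remarks that strict convexity (implied by strong convexity) of $J^{\textrm{glob}}$, together with the existence of an optimizer guaranteed by Assumption~\ref{ass:uniqueness_of_w^star}, yields uniqueness via the standard result in \cite[p.~217]{fletcher1987practical}; you have merely spelled out the textbook midpoint-contradiction argument underlying that citation.
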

\begin{fact}[Uniqueness of $w^o(\eta)$]
	\label{fact:approx_unique}
	When \eqref{eq:strongly_convex_condition1} holds, the optimizer $w^o(\eta)$ of \eqref{eq:global_cost_approx} is unique for any $\eta\geq 0$.\qed
\end{fact}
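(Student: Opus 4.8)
The plan is to show that the augmented cost $J^\textrm{glob}_{\eta}(w)$ defined in \eqref{eq:J^approx} is strongly convex and coercive for every $\eta \geq 0$; a strongly convex function that attains its infimum has a unique minimizer, which is precisely $w^o(\eta)$ solving \eqref{eq:global_cost_approx}.

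First I would establish that the smooth part $\sum_{k=1}^N J_k(w)$ is strongly convex: summing \eqref{eq:strongly_convex_condition1} over $k$ gives $\nabla_w^2\!\big(\sum_{k} J_k(w)\big) = \sum_{k}\nabla_w^2 J_k(w) \geq \big(\sum_{k}\lambda_{k,\min}\big) I_M$, and $\sigma \triangleq \sum_{k=1}^N \lambda_{k,\min} > 0$, so $\sum_k J_k$ is $\sigma$-strongly convex. Next I would verify that each penalty function $p_k(w)$ in \eqref{eq:p} is convex, hence so is $\eta\sum_k p_k(w)$ for any $\eta \geq 0$. For the inequality terms, $\delta^\textrm{IP}(\cdot)$ is convex and is also nondecreasing on $\mathbb{R}$ --- a convex function that vanishes on $(-\infty,0]$ and is positive on $(0,\infty)$, as required by \eqref{eq:inequality_penalty_condition}, cannot decrease anywhere (this monotonicity is evident for the concrete choice \eqref{eq:SIP}). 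Since each $g_{k,l}(w)$ is convex, the composition $\delta^\textrm{IP}(g_{k,l}(w))$ of a convex nondecreasing scalar function with a convex vector function is convex. For the equality terms, $\delta^\textrm{EP}(\cdot)$ is convex and each $h_{k,u}(w)$ is affine, so $\delta^\textrm{EP}(h_{k,u}(w))$ is convex. A nonnegative combination of convex functions is convex, which gives convexity of $p_k$.

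Adding the $\sigma$-strongly convex function $\sum_k J_k$ to the convex function $\eta\sum_k p_k$ yields that $J^\textrm{glob}_{\eta}$ is itself $\sigma$-strongly convex, i.e.,
\[
	J^\textrm{glob}_{\eta}(w) \;\geq\; J^\textrm{glob}_{\eta}(w_0) + \nabla_w J^\textrm{glob}_{\eta}(w_0)^{\T}(w-w_0) + \tfrac{\sigma}{2}\|w-w_0\|^2
\]
for any fixed $w_0$. The right-hand side diverges as $\|w\|\rightarrow\infty$, so $J^\textrm{glob}_{\eta}$ is coercive; being also continuous, it attains its infimum, and a minimizer $w^o(\eta)$ exists. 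Uniqueness then follows in the usual way: if $w_1 \neq w_2$ both minimized $J^\textrm{glob}_{\eta}$, strict convexity (a consequence of strong convexity) would give $J^\textrm{glob}_{\eta}\!\big(\tfrac{w_1+w_2}{2}\big) < \tfrac12 J^\textrm{glob}_{\eta}(w_1) + \tfrac12 J^\textrm{glob}_{\eta}(w_2)$, contradicting minimality. Note that, in contrast to Fact \ref{fact:uniqueSolution}, no feasibility assumption on \eqref{eq:orig_dist_problem} is needed here, and the argument trivially covers $\eta = 0$, where $J^\textrm{glob}_{\eta} = J^\textrm{glob}$.

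The one step that deserves care, and that I expect to be the only real obstacle, is the convexity of the composite inequality-penalty terms $\delta^\textrm{IP}(g_{k,l}(w))$: this relies on $\delta^\textrm{IP}$ being not merely convex but monotone nondecreasing, a property that is not listed explicitly among the standing assumptions but that does follow from \eqref{eq:inequality_penalty_condition} together with convexity. Everything else reduces to standard facts --- strong convexity is preserved under the addition of convex terms, and a coercive, strictly convex, continuous function on $\mathbb{R}^M$ has exactly one minimizer.
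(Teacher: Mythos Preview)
Your proposal is correct and follows essentially the same approach as the paper: the paper justifies Fact~\ref{fact:approx_unique} with a single sentence, observing that the aggregate cost in \eqref{eq:global_cost_approx} is strongly convex because each $J_k$ is strongly convex and the penalty terms are convex. You simply fill in the details that the paper leaves implicit---the composition argument for convexity of $\delta^\textrm{IP}(g_{k,l}(\cdot))$, the coercivity step for existence, and the strict-convexity contradiction for uniqueness---so there is no substantive difference in method.
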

	Observe that Fact \ref{fact:approx_unique} does not require the existence of $w^\star$ (Assumption \ref{ass:uniqueness_of_w^star}) in order for $w^o(\eta)$ to be unique---since in this case, $w^o(\eta)$ will be infeasible in terms of $\mathbb{W}$ even as $\eta\rightarrow\infty$, and thus not meaningful. Fact \ref{fact:uniqueSolution} follows from Assumptions \ref{ass:uniqueness_of_w^star}--\ref{ass:Hessian_cost} since strict convexity (which is guaranteed by strong convexity) of the objective function, and the existence of an optimizer, guarantee uniqueness of the optimizer \cite[p.~217]{fletcher1987practical}. The reason Fact \ref{fact:approx_unique} follows from Assumption \ref{ass:Hessian_cost} is that the aggregate cost in \eqref{eq:global_cost_approx} will be strongly-convex. 

We also require the Hessian matrices of the penalty functions with respect to $w$ to be bounded from above, but not necessarily from below (they are obviously nonnegative definite since the penalty functions are convex).
\begin{assumption}[Penalty functions]
\label{ass:Hessian_penalty}
The Hessian matrix of each penalty function $p_k(w)$, with respect to $w$, is upper bounded, i.e., 
\begin{align}
	\nabla_w^2 p_k(w) \leq \lambda_{k,\max}^p I_M
\end{align}
where $\lambda^p_{k,\max} > 0$ for all $u = 1,\ldots,U_k$, $l=1,\ldots,L_k$, and $k=1,\ldots,N$. Furthermore, since the penalty functions are convex, their Hessian matrices are nonnegative definite.\qed
\end{assumption}
\begin{assumption}[Combination matrices]
\label{ass:primitive}
	The combination matrix $A$ in the penalized ATC or CTA implementation is primitive and doubly-stochastic.\qed
\end{assumption}
Since in our unified framework \eqref{eq:diff_C1}-\eqref{eq:diff_C2}, either $A_1$ or $A_2$ is the identity matrix, then Assumption \ref{ass:primitive} is equivalent to requiring that the product matrix $A=A_1A_2$ is primitive and doubly-stochastic. A doubly-stochastic matrix $A$ is one that satisfies $A^{\T} \mathds{1}=\mathds{1}$ and $A\mathds{1}=\mathds{1}$ so that the entries on each of its columns and on each of its rows add up to one. The widely used Metropolis weights \cite{book_chapter,boyd2004fastest,Xiaochuan} satisfy Assumption \ref{ass:primitive} and can be computed in a distributed manner:
\begin{align}
	a_{\ell k} = \begin{cases}
			\min\left(\frac{1}{|\mathcal{N}_\ell|}, \frac{1}{|\mathcal{N}_k|}\right), & \ell \in \mathcal{N}_k, \ell \neq k\\
			1 - \displaystyle{\sum_{j\in {\cal N}_k\backslash\{k\}}} a_{j k},& \ell = k\\
			0, & \textrm{otherwise}
	\end{cases}
	\label{eq:Metropolis_DS}
\end{align}
where the notation $|{\cal N}_{k}|$ denotes the degree of node $k$ or the number of its neighbors. The primitive condition on $A$ is satisfied by any connected network with at least one self-loop (i.e., at least one $a_{k,k}>0$) \cite{book_chapter}. This situation is common in practice where networks tend to be connected and at least one node has some level of trust in its own data. 
\begin{assumption}[Gradient noise model]
	\label{ass:noiseModeling}
	We model the perturbed gradient vector as:
	\begin{align}
		\widehat{\nabla_w} J_k(\w) &= \nabla_w J_k(\w) + \v_{k,i}(\w)
		\label{eq:grad_model}
	\end{align}
	where, conditioned on the past history of the iterates $\mathcal{H}_{i-1} \triangleq \{\w_{k,j}:k = 1 , \ldots, N\ \mathrm{and}\ j \leq i-1\}$, the gradient noise $\v_{k,i}(\w)$ is assumed to satisfy:
\begin{align}
		&\mathbb{E}\{\v_{k,i}(\w) | \mathcal{H}_{i-1}\} = 0 \label{eq:zero_mean_noise}\\
		&\mathbb{E}\|\v_{k,i}(\w)\|^2 \leq \alpha \mathbb{E}\|\w\|^2 + \sigma_v^2 \label{eq:variance_noise}
	\end{align}
	for some $\alpha\geq 0$, $\sigma_v^2 \geq 0$, and where $\w \in \mathcal{H}_{i-1}$.\hfill$\blacksquare$
\end{assumption}
\noindent Models similar to \eqref{eq:zero_mean_noise}--\eqref{eq:variance_noise} are also used in the works by \cite{tsitsiklis1986distributed,Polyak} on distributed algorithms --- see the explanation in \cite{Jianshu_common_wo}. We are now ready to state our main results. We delay most of the proofs to the appendices to simplify the exposition.
\section{Main Convergence Result}
First, we characterize the distance between the optimizer of the augmented cost function \eqref{eq:J^approx}, $w^o(\eta)$, and the optimizer of the original optimization problem \eqref{eq:orig_dist_problem}, $w^\star$. This distance appears in the first term of \eqref{eq:error_expansion_1}. Therefore, in order to show that the right-hand-side of \eqref{eq:error_expansion_1} can be made arbitrarily small, we must first show that this distance can be made arbitrarily small by choosing $\eta$ appropriately. For convenience, we introduce the compact notation:
\begin{align}
w^o(\infty)\triangleq \lim_{\eta\rightarrow\infty} w^o(\eta) \label{eq:w(infty,tau)}
\end{align}
\begin{theorem}[Approaching optimal solution]
\label{thm:approx_error}
	Under Assumptions \ref{ass:uniqueness_of_w^star}, \ref{ass:Hessian_cost}, it holds that:
	\begin{align}
		 \|w^\star - w^o(\infty)\| = 0
		 \label{eq:approx_error_norm}
	\end{align}
	so that $w^o(\infty)$ is feasible and optimal. 
\end{theorem}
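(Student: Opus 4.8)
The plan is to run the classical exterior-penalty limiting argument, with the only extra ingredient being that strong convexity of $J^{\textrm{glob}}(w)=\sum_{k=1}^N J_k(w)$ supplies the compactness needed to make the limit $w^o(\infty)$ exist. I would begin by writing $P(w)\triangleq\sum_{k=1}^N p_k(w)$, so that $J^{\textrm{glob}}_{\eta}(w)=J^{\textrm{glob}}(w)+\eta P(w)$, and recording three elementary facts about $P$: it is continuous (a finite sum of the continuous penalty functions $\delta^{\textrm{IP}},\delta^{\textrm{EP}}$ composed with the continuous constraint maps $g_{k,l},h_{k,u}$); it is nonnegative everywhere; and $P(w)=0$ if and only if $w\in\mathbb{W}$, since $\delta^{\textrm{IP}}$ and $\delta^{\textrm{EP}}$ vanish exactly on $\{x\le 0\}$ and $\{x=0\}$ respectively. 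In particular $P(w^\star)=0$ because $w^\star\in\mathbb{W}$ by Assumption~\ref{ass:uniqueness_of_w^star}.

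Next I would extract the two bounds that drive everything. Testing the optimality of $w^o(\eta)$ against the feasible point $w^\star$ gives $J^{\textrm{glob}}_{\eta}(w^o(\eta))\le J^{\textrm{glob}}_{\eta}(w^\star)=J^{\textrm{glob}}(w^\star)$, i.e. $J^{\textrm{glob}}(w^o(\eta))+\eta P(w^o(\eta))\le J^{\textrm{glob}}(w^\star)$. Since $P\ge 0$ this yields
\[
J^{\textrm{glob}}(w^o(\eta))\le J^{\textrm{glob}}(w^\star),\qquad \eta\,P(w^o(\eta))\le J^{\textrm{glob}}(w^\star)-J^{\textrm{glob}}(w^o(\eta)).
\]
(One may also add the optimality inequalities for two values $\eta_1<\eta_2$ to see that $\eta\mapsto P(w^o(\eta))$ is nonincreasing and $\eta\mapsto J^{\textrm{glob}}(w^o(\eta))$ is nondecreasing, which is a convenient sanity check but not strictly required.)

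Then I would establish the key compactness fact: the family $\{w^o(\eta)\}_{\eta\ge 0}$ is bounded. Here Assumption~\ref{ass:Hessian_cost} enters: from $\nabla_w^2 J_k(w)\ge\lambda_{k,\min}I_M$ the aggregate $J^{\textrm{glob}}$ is $\lambda$-strongly convex with $\lambda=\sum_k\lambda_{k,\min}>0$, hence coercive; thus it is bounded below and its sublevel set $\{w:\ J^{\textrm{glob}}(w)\le J^{\textrm{glob}}(w^\star)\}$ is bounded. The first displayed inequality puts every $w^o(\eta)$ in that sublevel set, giving boundedness, and the second displayed inequality then gives $P(w^o(\eta))\le\big(J^{\textrm{glob}}(w^\star)-\inf_w J^{\textrm{glob}}(w)\big)/\eta\to 0$ as $\eta\to\infty$.

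Finally I would close with a subsequence argument. Take any sequence $\eta_n\to\infty$; by boundedness pass to a subsequence $w^o(\eta_{n_j})\to\bar w$. Continuity of $P$ gives $P(\bar w)=\lim_j P(w^o(\eta_{n_j}))=0$, so $\bar w\in\mathbb{W}$; continuity of $J^{\textrm{glob}}$ together with $J^{\textrm{glob}}(w^o(\eta_{n_j}))\le J^{\textrm{glob}}(w^\star)$ gives $J^{\textrm{glob}}(\bar w)\le J^{\textrm{glob}}(w^\star)$, and since $\bar w$ is feasible while $w^\star$ minimizes $J^{\textrm{glob}}$ over $\mathbb{W}$, equality holds; by uniqueness of the constrained minimizer (Fact~\ref{fact:uniqueSolution}) we get $\bar w=w^\star$. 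Because every convergent subsequence of the bounded sequence $\{w^o(\eta_n)\}$ has the same limit $w^\star$, the whole sequence converges to $w^\star$, and since $\eta_n\to\infty$ was arbitrary, $w^o(\infty)=\lim_{\eta\to\infty}w^o(\eta)=w^\star$, so that $\|w^\star-w^o(\infty)\|=0$ and $w^o(\infty)$ is feasible and optimal. The main obstacle is precisely the uniform boundedness of $\{w^o(\eta)\}$: without a growth condition the penalized minimizers could drift off and the limit need not exist, and this is exactly where the strong convexity of the aggregate cost is used (indeed, as noted in Remark~\ref{rem:remark2}, strong convexity of a single $J_k$ already suffices); the remainder is the standard continuity-and-uniqueness packaging.
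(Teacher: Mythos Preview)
Your argument is correct and essentially self-contained; it is the classical exterior-penalty convergence proof carried out in full. The paper's proof reaches the same conclusion by a somewhat different route: rather than using coercivity and sublevel sets to bound $\{w^o(\eta)\}$, it invokes the strong-convexity gradient inequality $\|w^o(\eta)-w\|\le \tfrac{2}{\lambda_{\min}}\|\nabla_w J^{\textrm{glob}}_\eta(w)\|$, evaluates it at $w=w^\star$, and uses the standing assumption that $\nabla_w p_k(w^\star)=0$ for feasible $w^\star$ to obtain an $\eta$-independent bound $\|w^o(\eta)-w^\star\|\le \tfrac{2}{\lambda_{\min}}\|\nabla_w J^{\textrm{glob}}(w^\star)\|$. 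Having secured compactness, the paper then simply cites the standard penalty-limit result (Theorem~9.2.2 in Bazaraa et al.) for the conclusion, whereas you write out that subsequence argument explicitly. Your approach has the mild advantage of needing only $p_k(w^\star)=0$ (automatic from the penalty definitions) rather than $\nabla_w p_k(w^\star)=0$; the paper's approach, in exchange, produces an explicit quantitative bound on $\|w^o(\eta)-w^\star\|$ valid for every finite $\eta$, though it does not by itself force the limit to zero and still relies on the cited theorem for that step.
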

\begin{proof}
	Since $J^\textrm{glob}_{\eta}(w)$ is strongly convex, we have that for any point $w \in \mathbb{R}^M$, the distance from the optimizer $w^o(\eta)$ is bounded by \cite[p.~460]{cvx_book}:
	\begin{align}
		\|w^o(\eta)-w\| \leq \frac{2}{\lambda_{\min}} \|\nabla_w J^\textrm{glob}_{\eta}(w)\| \label{eq:dist_wo-w1}
	\end{align}
	where $\lambda_{\min} = \min_k \{\lambda_{k,\min}\}$ as defined in Assumption \ref{ass:Hessian_cost}. It is possible to obtain an upper bound in \eqref{eq:dist_wo-w1} that is independent of $\eta$ as follows. Since we are free to pick $w$, we let $w=w^\star$, where $w^\star \in \mathbb{W}$ by Assumption \ref{ass:uniqueness_of_w^star} to obtain
	\begin{align}
		\|w^o(\eta)-w^\star\| \leq \frac{2}{\lambda_{\min}} \|\nabla_w J^\textrm{glob}_\eta(w^\star)\|
	\end{align}
	Recalling \eqref{eq:J^approx}, we have that
	\begin{align}
		\nabla_w J^\textrm{glob}_\eta(w^\star) = \nabla_w J^\textrm{glob}(w^\star)+ \eta \sum_{k=1}^N p_k(w^\star)
	\end{align}
	but since by construction, $p_k(w') = 0$ when $w' \in \mathbb{W}$, we have that 
	\begin{align}
		\nabla_w J^\textrm{glob}_\eta(w^\star) = \nabla_w J^\textrm{glob}(w^\star)
	\end{align}
	and since $\|w^o(\eta)\| \leq \|w^o(\eta) - w^\star\| + \|w^\star\|$, we obtain
	\begin{align}
		\|w^o(\eta)\| \leq \frac{2}{\lambda_{\min}} \|\nabla_w J^\textrm{glob}(w^\star)\| + \|w^\star\| < \infty \label{eq:final_wo_bound_indep_eta}
	\end{align}
	The upper bound in \eqref{eq:final_wo_bound_indep_eta} is independent of $\eta$ and is also finite since $J^\textrm{glob}(w)$ is a continuous function in $w$. To obtain \eqref{eq:approx_error_norm}, we appeal to Theorem 9.2.2 of \cite{Bazaraa} by noting that $w^o(\eta) \in \mathbb{B}$, where $\mathbb{B} \subset \mathbb{R}^M$ is the compact set \cite[p.~2--3,188]{SteinReal2005}
	\begin{align}
	\mathbb{B} = \left\{w : \|w\| \leq \frac{2}{\lambda_{\min}} \|\nabla_w J^\textrm{glob}(w^\star)\|+\|w^\star\|\right\}
	\end{align}
	from which we can conclude \eqref{eq:approx_error_norm}.
\end{proof}
We now turn our attention to the convergence of the distributed algorithm.
\begin{theorem}[Convergence condition]
	\label{thm:alg_error}
	Let Assumptions \ref{ass:Hessian_cost}, \ref{ass:Hessian_penalty}, \ref{ass:primitive}, and \ref{ass:noiseModeling} hold. Then, the diffusion strategy \eqref{eq:diff_C1}--\eqref{eq:diff_C2} converges for sufficiently small positive step-sizes, namely, for step-sizes that satisfy 
	{
	\begin{align}
	\mu &< \min_{1\leq k \leq N}\left\{\frac{2\lambda_{k,\max}}{\lambda_{k,\max}^2+2\alpha},\frac{2\lambda_{k,\min}}{\lambda_{k,\min}^2+2\alpha}, \frac{2}{\eta \cdot\lambda_{k,\max}^p}\right\}
	\end{align}	}
	Specifically, it holds that for small $\mu$ 
	\begin{align}	
		\limsup_{i\rightarrow\infty}\  \E\|w^o(\eta)-\w_{k,i}\|^2	&\leq O(\mu) + O((\eta\cdot\mu)^2) \label{eq:result_SS_small_mu}
	\end{align}
	so that
	\begin{align}
		\lim_{\mu\rightarrow 0} \limsup_{i\rightarrow\infty}\  \E\|w^o(\eta)-\w_{k,i}\|^2	&= 0. \label{eq:limsup}
	\end{align}
	
\end{theorem}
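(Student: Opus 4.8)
The plan is to carry out a mean-square-error analysis of the network error dynamics relative to the penalized minimizer $w^o(\eta)$, following the type of arguments used for diffusion optimization in \cite{Jianshu_common_wo,Jianshu_diff_wo}. First I would introduce the error quantities $\widetilde{\w}_{k,i}\triangleq w^o(\eta)-\w_{k,i}$, $\widetilde{\bphi}_{k,i-1}\triangleq w^o(\eta)-\bphi_{k,i-1}$, $\widetilde{\bzeta}_{k,i}\triangleq w^o(\eta)-\bzeta_{k,i}$ and stack them over the network. Left-stochasticity of $A_1,A_2$ gives $\widetilde{\bphi}_{i-1}=(A_1^\T\otimes I_M)\widetilde{\w}_{i-1}$ and $\widetilde{\w}_i=(A_2^\T\otimes I_M)\widetilde{\bpsi}_i$. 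For the two gradient steps I would invoke the fundamental theorem of calculus to write $\nabla_w J_k(\bphi_{k,i-1})-\nabla_w J_k(w^o(\eta))=-H^J_{k,i-1}\widetilde{\bphi}_{k,i-1}$ and $\nabla_w p_k(\bzeta_{k,i})-\nabla_w p_k(w^o(\eta))=-H^p_{k,i}\widetilde{\bzeta}_{k,i}$, with $H^J_{k,i-1}=\int_0^1\nabla_w^2 J_k(w^o(\eta)-t\widetilde{\bphi}_{k,i-1})\,dt$ and $H^p_{k,i}$ defined analogously; by Assumptions \ref{ass:Hessian_cost}--\ref{ass:Hessian_penalty} these satisfy $\lambda_{k,\min}I_M\leq H^J_{k,i-1}\leq\lambda_{k,\max}I_M$ and $0\leq H^p_{k,i}\leq\lambda^p_{k,\max}I_M$. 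Substituting, and using the gradient-noise model \eqref{eq:grad_model}, I arrive at a network recursion $\widetilde{\w}_i=\mathcal{T}_i\,\widetilde{\w}_{i-1}+\mu(A_2^\T\otimes I_M)\mathcal{B}_i\,\mathcal{v}_i+b_i$, where $\mathcal{B}_i=\mathrm{blockdiag}\{I_M-\mu\eta H^p_{k,i}\}$, $\mathcal{T}_i=(A_2^\T\otimes I_M)\,\mathcal{B}_i\,\mathrm{blockdiag}\{I_M-\mu H^J_{k,i-1}\}\,(A_1^\T\otimes I_M)$, $\mathcal{v}_i=\mathrm{col}\{\v_{k,i}(\bphi_{k,i-1})\}$, and the deterministic bias $b_i=\mu(A_2^\T\otimes I_M)[\mathcal{B}_i g^o+\eta p^o]$ with $g^o=\mathrm{col}\{\nabla_w J_k(w^o(\eta))\}$ and $p^o=\mathrm{col}\{\nabla_w p_k(w^o(\eta))\}$. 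The crucial structural fact is that $w^o(\eta)$ minimizes $\sum_k J'_{k,\eta}$, so $\sum_k[\nabla_w J_k(w^o(\eta))+\eta\nabla_w p_k(w^o(\eta))]=0$; writing $\mathcal{B}_i g^o+\eta p^o=(g^o+\eta p^o)-\mu\eta\,\mathrm{blockdiag}\{H^p_{k,i}\}\,g^o$ therefore splits $b_i$ into a piece lying in the disagreement subspace $\{x:(\mathds{1}_N^\T\otimes I_M)x=0\}$ plus a residual of norm $O(\mu^2\eta)$.

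Next I would establish the contraction. Since $A$ (equivalently $A=A_1A_2$, one factor being $I_N$) is doubly stochastic by Assumption \ref{ass:primitive}, $\|A_j^\T\otimes I_M\|=1$; the step-size condition $\mu<2/(\eta\lambda^p_{k,\max})$ yields $\|I_M-\mu\eta H^p_{k,i}\|\leq 1$; and the conditions $\mu<2\lambda_{k,\min}/(\lambda_{k,\min}^2+2\alpha)$ and $\mu<2\lambda_{k,\max}/(\lambda_{k,\max}^2+2\alpha)$ are precisely what make, after taking conditional expectations and bounding $\E\|\bphi_{k,i-1}\|^2\leq 2\E\|\widetilde{\bphi}_{k,i-1}\|^2+2\|w^o(\eta)\|^2$ in \eqref{eq:variance_noise}, the mean-square map of the noisy gradient step a contraction by a factor $1-O(\mu)$. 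A single spectral-norm estimate of the recursion would already give boundedness, but only an $O(1)$ bias contribution. To reach the sharper $O(\mu)+O((\eta\mu)^2)$ statement I would change coordinates using the Jordan/eigen-decomposition of $A$ and split $\widetilde{\w}_i$ into its network-average component $\bar{\w}_i=\tfrac{1}{N}(\mathds{1}_N^\T\otimes I_M)\widetilde{\w}_i$ and its disagreement component. This produces two coupled recursive inequalities: the average component contracts like $1-O(\mu)$ and is driven by $O(\mu^2\sigma_v^2)$, by $O(\mu^2)$ times the disagreement energy, and only by $O(\mu^4\eta^2)$ from the residual bias (the disagreement-subspace part of $b_i$ is invisible to the average); the disagreement component contracts like $|\lambda_2(A)|^2(1+O(\mu))<1$ and is driven by $O(\mu^2\sigma_v^2)$, by $O(\mu^2)\|g^o+\eta p^o\|^2=O((\mu\eta)^2)$, and by $O(\mu)$ times the average energy. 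Solving this $2\times2$ coupled system in steady state gives $\limsup_i\E\|\bar{\w}_i\|^2=O(\mu)$ and $\limsup_i\E\|\text{disagreement}\|^2=O((\eta\mu)^2)$, hence \eqref{eq:result_SS_small_mu}, and \eqref{eq:limsup} follows at once by letting $\mu\to0$.

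The main obstacle — and the point at which the penalty construction genuinely departs from plain diffusion optimization — is the bias $b_i$. Because $w^o(\eta)$ is in general infeasible, the per-agent penalty gradients $\nabla_w p_k(w^o(\eta))$ do not vanish; only their $\eta$-weighted aggregate cancels the cost gradients. One must exploit exactly that cancellation to push $b_i$ into the disagreement subspace, where the contraction gap is $O(1)$ rather than $O(\mu)$, so its contribution to the steady-state error is $O(\mu^2\eta^2)$ rather than the fatal $O(\mu\eta^2)$ it would produce if it leaked into the average component. Controlling the coupling between the average and disagreement recursions so that neither bound reinflates the other is the delicate bookkeeping the appendix must carry out; the remaining ingredients — the mean-value linearizations, the noise variance estimates, and the unit spectral-norm bounds for the combination matrices — are routine.
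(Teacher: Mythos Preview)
Your proposal is correct in spirit and lands on the right answer, but it organizes the analysis differently from the paper's proof. The paper does \emph{not} work directly with $\widetilde{\w}_{k,i}=w^o(\eta)-\w_{k,i}$ and then split into consensus/disagreement coordinates. Instead, it first shows that the \emph{noiseless} diffusion map is a contraction in the block-maximum norm and therefore has a unique deterministic fixed point $w_\infty\in\mathbb{R}^{MN}$; it then decomposes
\[
\E\|\mathds{1}_N\otimes w^o(\eta)-\w_i\|^2\;\le\;2\,\|\mathds{1}_N\otimes w^o(\eta)-w_\infty\|^2\;+\;2\,\mathds{1}_N^\T\,\E P[\w_i-w_\infty].
\]
The second term is controlled by a purely block-energy recursion $\E P[\w_i-w_\infty]\preceq A_2^\T\Gamma A_1^\T\,\E P[\w_{i-1}-w_\infty]+\mu^2 b$ (no eigenspace split is needed here; the contraction comes from $\|I-\mu H_{k}\|\le\gamma_k<1$ and $\|I-\mu\eta Z_{k}\|\le 1$), and yields $O(\mu)$. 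The first term is an exact deterministic fixed-point equation in which the Jordan decomposition of $A_2^\T A_1^\T$ is applied, and the same cancellation you identified---$(\mathds{1}^\T\otimes I_M)(g^o+\eta f^o)=0$ from optimality of $w^o(\eta)$---kills the top block of the driving vector, leaving $\|\mathds{1}_N\otimes w^o(\eta)-w_\infty\|^2=O((\mu\eta)^2)$.

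What each buys: the paper's fixed-point detour cleanly separates the deterministic bias (where the Jordan algebra is applied to a \emph{static} linear equation with constant coefficients) from the stochastic fluctuation (where only coarse contraction bounds are needed and the time-varying Hessians cause no trouble). Your direct route has to carry the random, iterate-dependent matrices $\mathcal{B}_i,\mathcal{H}^J_{i}$ through the coordinate change, so the coupled $2\times2$ system involves time-varying random coefficients; this is workable---your identification of the key cancellation and of where the $O(\mu)$ vs.\ $O((\mu\eta)^2)$ contributions land is exactly right---but it is heavier bookkeeping. Two small points: your $b_i$ is not actually deterministic (it depends on $\bzeta_{k,i}$ through $\mathcal{B}_i$), though the uniform Hessian bounds make this harmless; and your claim that the disagreement contracts like $|\lambda_2(A)|^2$ tacitly assumes a norm in which this holds, which for non-symmetric $A$ requires passing through the Jordan form, as the paper does.
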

\begin{proof}
	See Appendix \ref{app:alg_error}.
\end{proof}
Theorem \ref{thm:alg_error} states that the expected squared distance between $\w_{k,i}$ at each node and $w^o(\eta)$ is on the order of $\mu$ or $(\eta\cdot\mu)^2$, whichever is larger. This implies that when the step-size is chosen to be sufficiently small, the expected error can be made arbitrarily small as long as $\eta\in O(1/\mu)$. 

We conclude from \eqref{eq:approx_error_norm} and \eqref{eq:limsup} that
\begin{align}
	\lim_{{\scriptsize{\begin{array}{c}
	\mu\!\rightarrow\! 0 \\ 
	\eta\!\rightarrow\!\infty
	\end{array}}}} \limsup_{i\rightarrow\infty}\ \E\|w^\star \!-\! \w_{k,i}\|^2 = 0 \label{eq:limit_mu_0}
\end{align}
which may be simplified if we choose the parameter $\eta$ in terms of $\mu$ as follows:
\begin{align}
	\eta \triangleq \mu^{-\theta},\quad 0< \theta < 1
	\label{eq:condition_omega}
\end{align}
Then, we have that:
\begin{align}
\boxed{
	\lim_{\mu\rightarrow 0} \limsup_{i\rightarrow\infty}\ \E\|w^\star - \w_{k,i}\|^2 = 0
	}
\end{align}
We conclude that the diffusion strategy \eqref{eq:diff_C1}--\eqref{eq:diff_C2} effectively solves \eqref{eq:orig_dist_problem} in a fully distributed manner with progressively improving estimates of the optimizer as $\mu\rightarrow 0$. 
In addition, the diffusion algorithm, which utilizes a constant step-size, is capable of tracking varying constraint sets and will continue to track the true optimizer $w^\star$ as the convex constraint sets, $\mathbb{W}_k$, and cost functions, $J_k(w)$, drift, as illustrated next.
\begin{figure*}[tH!]
	\centering
	\includegraphics[width=14.5cm,height=5.7cm]{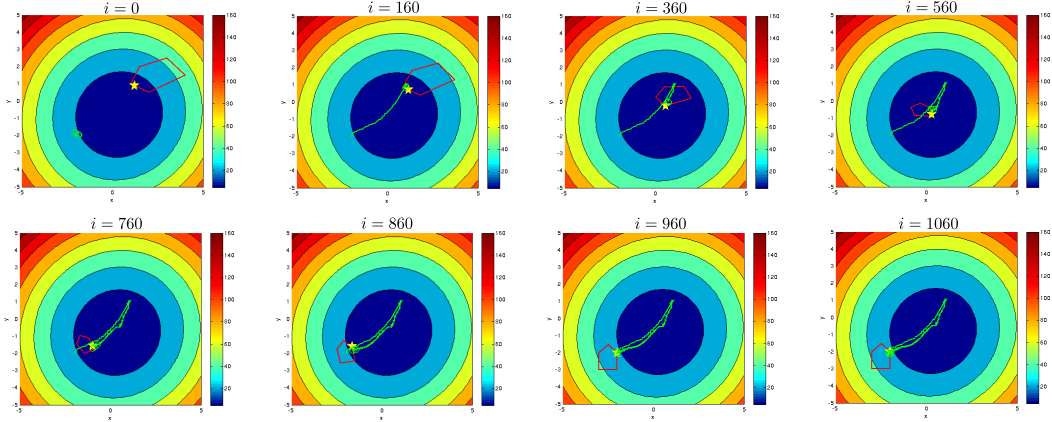}\vspace*{-1\baselineskip}
	\caption{The star indicates the location of the optimal minimizer, $w^\star_i$, which is allowed to drift in this simulation to illustrate the tracking ability of the algorithm. The green curve illustrates the location of the estimates by the nodes; it is seen from the second plot from the left in the first row corresponding to $i=160$ that this curve converges to the minimizer location. As the constraint set begins to change starting at $i = 160$, we notice that the estimates are able to track the minimizer even as the feasible region shrinks and changes with time.}\vspace{-1\baselineskip}
	\label{fig:sim}
\end{figure*}

\section{Simulation}
We consider a distributed optimization problem with $N=5$ nodes in order to illustrate the trajectory of the solutions clearly. Each node is associated with the mean-square-error cost $J_k(w) = \E (\d_k(i)-\h_{k,i}^\T w)^2$, where the desired signal $\d_k(i)$ is related to some unknown model $\overline{w}$ via the linear regression model:
\begin{align}
	\d_k(i) \triangleq \h_{k,i}^\T \overline{w} + \v_k(i)
\end{align}

\noindent To illustrate adaptation and tracking ability, we introduce a single moving hyper-plane per node of the form
\begin{align}
	g_{k,i}(w) \triangleq b_{k,i}^\T w - z_k(i)
\end{align}
where $\{b_{k,i},z_k(i)\}$ are allowed to change with $i$. If we define the matrix $B_i = \textrm{col}\{b_{1,i}^\T,b_{2,i}^\T,\ldots,b_{N,i}^\T\}$ and the vector $z_i=\textrm{col}\{z_1(i),\ldots,z_N(i)\}$, then we have that the global optimization problem is of the form:
\begin{alignat}{3}
	&\min_w\ && \sum_{k=1}^N \E (\d_k(i)-\h_{k,i}^\T w)^2 \label{eq:simulation_orig_problem}\\
	&\textrm{subject\ to\ }\ \ && B_i w - z_i \preceq 0 \nonumber
\end{alignat}
While the projections associated with the distributed solution of this problem may be solved analytically, this setup allows us to demonstrate the tracking ability of the proposed algorithm, which does not rely on the use of projections. We let the inequality constraints drift with time and we track the progress of the algorithm as the estimates at each of the nodes move towards to true optimizer $w^\star_i$ of \eqref{eq:simulation_orig_problem} at time $i$. The statistical distributions associated with $\h_{k,i}$ and $\v_{k}(i)$ remain fixed for the duration of the simulation---and, therefore, $J_k(w)$ is fixed in this simulation while the constraints are drifting. While this need not be the case in general, and the diffusion algorithm will handle the non-stationary cost function scenario as well, keeping the cost function fixed facilitates the illustration of the results.

The variance of the noise $\v_k(i)$ is chosen randomly for each node so that $\sigma_{v,k}^2 \sim U(0,1)$, where $U(0,1)$ denotes a uniform distribution on the range $[0,1)$. The covariance matrices $\E \h_{k,i} \h_{k,i}^\T = R_{h,k}$ are generated as $R_{h,k} = Q_k \Lambda_k Q_k^\T$ where $Q_k$ is a randomly generated orthogonal matrix and $\Lambda_k$ is a diagonal matrix with random elements so that $(\Lambda_{k})_{l,l} \sim U(0,1)$. The model vector $\overline{w} \in \mathbb{R}^2$ is chosen randomly for the simulation. The constraint set is also initialized randomly, morphs and moves as time progresses throughout the simulation. A stepsize of $\mu=0.01$ is chosen with $\eta = 30$. The combination weights used throughout the simulation are based on the Metropolis rule \eqref{eq:Metropolis_DS}. The penalty function $\delta(x) = \sqrt{x^2+\rho^2}$ was used in the simulation with $\rho = 0.01$. Figure \ref{fig:sim} illustrates the evolution of the estimates across the nodes as time progresses. We observe that the nodes are attracted towards the feasible region from their initial position and quickly converge towards the true optimizer $w^\star_i$, which is initially stationary. As the constraint set begins to change after $i = 160$, we notice that each node's estimate of the optimizer changes and tracks $w^\star_i$ even as the feasible region shrinks and continues to move throughout the simulation. The green line corresponds to the average trajectory of the nodes' estimates throughout the simulation.
\section{Conclusion}
In this work, we developed a distributed optimization strategy based on diffusion adaptation that allows a network of agents to solve a constrained convex problem in which the objective function is the aggregate sum of individual convex objective functions distributed across the nodes. The constraint set is the intersection of convex constraints at each node. The algorithm does not require the agents to know about other constraints besides their own. We showed that through local interactions, the network is able to approach the desired global minimizer to arbitrarily good accuracy levels. The convergence analysis was performed in the stochastic setting in which the gradient vectors of the individual cost functions may not be available at each node and are approximated in the presence of gradient noise. 

\appendices
{{

\section{Proof of Theorem \ref{thm:alg_error}}
\label{app:alg_error}
In this section, we analyze how well the diffusion strategy \eqref{eq:diff_C1}-\eqref{eq:diff_C2} approaches the optimal solution $w^o(\eta)$ of the augmented cost \eqref{eq:global_cost_approx}. We examine this performance in terms of the mean squared error measure, $\E\|w^o(\eta)-\w_{k,i}\|^2$, in the presence of gradient noise, as modeled by Assumption \ref{ass:noiseModeling}. We extend the energy analysis framework developed in \cite{Jianshu_diff_wo} to handle constrained optimization. Compared with the diffusion strategy studied in \cite{Jianshu_diff_wo}, however, the models there did not incorporate projection steps similar to \eqref{eq:ATC_P} and \eqref{eq:CTA_P}.  When these steps are incorporated, certain differences arise in the analysis that require attention (e.g., some symmetry properties present in the analysis of \cite{Jianshu_diff_wo} are lost in the current context and need to be addressed). We first show that the diffusion strategy, in the absence of gradient noise, converges and has a fixed-point. Subsequently, we analyze the distance between this point and the vectors $w^o(\eta)$ and $\w_{k,i}$ in the mean-square-sense.

\subsection{Existence of Fixed Point}
At each iteration, we can view the diffusion strategy \eqref{eq:diff_C1}--\eqref{eq:diff_C2} as a mapping from the vectors $\{\w_{k,i-1}\}$ to the vectors $\{\w_{k,i}\}$ or, more generically, as a mapping from some block vector $x$ to another block vector $w$. Thus, let $x=\mbox{\rm col}\{x_1,x_2,\ldots,x_N\}$ denote a block vector with sub-vectors $x_k$ of size $M\times 1$. Let also $P[x] \triangleq \mbox{\rm col}\{\|x_1\|^2,\|x_2\|^2,\ldots,\|x_N\|^2\}$.  Then, we observe that given any two input vectors $x^1, x^2 \in \mathbb{R}^{MN}$, the resulting updated vectors $w^1$ and $w^2$ are given by 
\begin{subequations}
\begin{align}
	w^1 &= (A_2^\T \otimes I_M) \psi^1,\quad w^2 = (A_2^\T \otimes I_M) \psi^2 \label{eq:psi_to_w}
\end{align}
where the intermediate vectors $\psi^1$ and $\psi^2$ are constructed as follows in terms of other intermediate block vectors $\{\zeta^1,\zeta^2,\phi^1,\phi^2\}$:
\begin{align}
	\!\!\psi^1 \!&=\!\! \left[\!\!\!\!\begin{array}{c}
	\zeta_1^1 \!-\! \mu\eta \nabla_w p_1(\zeta_1^1)\\ 
	\vdots\\
	\zeta_N^1 \!-\! \mu\eta \nabla_w p_N(\zeta_N^1)
	\end{array} \!\!\!\!\right]\!,\!
	\psi^2 \!\!=\!\! \left[\!\!\!\!\begin{array}{c}
	\zeta_1^2 \!-\! \mu\eta \nabla_w p_1(\zeta_1^2)\\ 
	\vdots\\
	\zeta_N^2 \!-\! \mu\eta \nabla_w p_N(\zeta_N^2)
	\end{array} \!\!\!\!\right] \label{eq:zeta_to_psi}\\
	\!\!\zeta^1 \!&=\!\! \left[\!\!\!\!\begin{array}{c}
	\phi_1^1 \!-\! \mu \nabla_w J_1(\phi_1^1)\\ 
	\vdots\\
	\phi_N^1 \!-\! \mu \nabla_w J_N(\phi_N^1)
	\end{array} \!\!\!\!\right]\!\!,\ \!\!
	\zeta^2 \!\!=\!\!\! \left[\!\!\!\!\begin{array}{c}
	\phi_1^2 \!-\! \mu \nabla_w J_1(\phi_1^2)\\ 
	\vdots\\
	\phi_N^2 \!-\! \mu \nabla_w J_N(\phi_N^2)
	\end{array} \!\!\!\!\right] \label{eq:phi_to_zeta}\\
	\phi^1 &= (A_1^\T \otimes I_M) x^1,\quad \phi^2 = (A_1^\T \otimes I_M) x^2 \label{eq:x_to_phi}
\end{align}
\end{subequations}
We now verify that the mapping $x \mapsto w$ is a contraction for sufficiently small step-sizes. Indeed, using the sub-multiplicative property of the block-maximum norm and the fact that $A_1$ and $A_2$ are left-stochastic \cite{book_chapter}, we conclude from \eqref{eq:psi_to_w} and \eqref{eq:x_to_phi}:
\begin{align}
	\|w^1-w^2\|_{b,\infty} &\leq \|\psi^1-\psi^2\|_{b,\infty} \label{eq:psi_diff_expression}\\
	\|\phi^1-\phi^2\|_{b,\infty} &\leq \|x^1 - x^2\|_{b,\infty}
\end{align}
Now, we can bound the quantity $\|\psi^1-\psi^2\|_{b,\infty}$ by appealing to the mean-value theorem \cite[p.~24]{Polyak} to write:
	\begin{align}
		\nabla_w& p_k(\zeta_k^1) -  \nabla_w p_k(\zeta_k^2) \!= \nonumber\\
		& \left(\int_0^1 \nabla_w^2 p_k(\zeta_k^2 + t (\zeta_k^1-\zeta_k^2)) dt \right)(\zeta_k^1-\zeta_k^2) \label{eq:mean-value-theorem-Hessian}
	\end{align}
	from which we conclude that 
\begin{align}
	&\|\psi^1-\psi^2\|_{b,\infty} \nonumber\\
	&\leq \max_{1\leq k\leq N} \left\|I_M\!-\!\mu\eta\!\! \int_0^1 \nabla_w^2 p_k(\zeta_k^2 + t (\zeta_k^1-\zeta_k^2)) dt \right\|\!\cdot\!\! \|\zeta_k^1-\zeta_k^2\| \label{eq:first_eq_psi_diff}
\end{align}
Now, due to Assumption \ref{ass:Hessian_penalty}, we have that 
\begin{equation}
	\left\|\!I_M\!\!-\!\!\mu\eta \!\! \int_0^1\!\!\!\!\! \nabla_w^2 p_k(\zeta_k^2 + t (\zeta_k^1-\zeta_k^2)) dt \right\| \!\leq\! \max\{|1\!-\!\mu \eta \lambda_{k,\max}^p|,1\!\} \label{eq:bound_hessian_p}
\end{equation}}
The bound on the right-hand side of \eqref{eq:bound_hessian_p} can be guaranteed to be at most one when
\begin{equation}
\begin{aligned}
	    0 &\leq \mu\eta \leq \min_{1\leq k \leq N} \left\{ \frac{2}{\lambda_{k,\max}^p} \right\}
\end{aligned}
\label{eq:condition_nu_1}
\end{equation}
so that
\begin{align}
	\|\psi^1-\psi^2\|_{b,\infty} \leq \|\zeta^1-\zeta^2\|_{b,\infty} \label{eq:last_eq:psi_diff}
\end{align}
In a similar manner to \eqref{eq:mean-value-theorem-Hessian}--\eqref{eq:first_eq_psi_diff}, we can verify that 
\begin{align}
		&\|\zeta^1-\zeta^2\|_{b,\infty} \nonumber\\
	&\leq \max_{1\leq k\leq N} \left\|I_M\!-\!\mu\!\! \int_0^1 \!\!\!\!\nabla_w^2 J_k(\phi_k^2 + t (\phi_k^1-\phi_k^2)) dt \right\|\!\cdot\! \|\phi_k^1-\phi_k^2\| \label{eq:first_eq_psi_diff2}
\end{align}
and due to Assumption \ref{ass:Hessian_cost}, 
\begin{align}
	\lambda_{k,\min} I_M \!\leq\! \int_0^1\!\!\!\! \nabla_w^2 J_k(\phi_k^2 + t (\phi_k^1-\phi_k^2)) dt \!\leq\! \lambda_{k,\max} I_M
\end{align}
It follows that 
\begin{align}
	\|\zeta^1-\zeta^2\|_{b,\infty} &\leq \gamma \cdot \|\phi^1-\phi^2\|_{b,\infty}\\
	\gamma &\triangleq \max_{1\leq k\leq N} \left\{\gamma_k\right\} \label{eq:gamma}\\
	\gamma_k &\triangleq \max\{|1-\mu \lambda_{k,\min}|,|1-\mu \lambda_{k,\max}|\} \label{eq:gamma_k}
\end{align}
and $\gamma_k$ satisfies $0 \leq \gamma_k < 1$ when
\begin{align}
	0 < \mu < \min_{1\leq k \leq N}\left\{\frac{2}{\lambda_{k,\max}}\right\} \label{eq:bound_stability2}
\end{align}
Combining the previous results together we arrive at 
\begin{align}
	\|w^1-w^2\|_{b,\infty} \leq \gamma \|x^1 - x^2\|_{b,\infty}
\end{align}
for $\gamma < 1$ when \eqref{eq:condition_nu_1} and \eqref{eq:bound_stability2} are satisfied. 
\noindent \begin{remark}\label{rem:remark2} \emph{It is the above argument that relies on the requirement that all individual costs are strongly-convex so that all the $\lambda_{k,\min}$ are strictly positive and each $\gamma_k$ can be made strictly less than one. If we relax the strong convexity assumption and require only at least one of the individual costs to be strongly convex, then the above argument needs to be adjusted as done in \cite{chen2012limiting}; nevertheless, the conclusion of Theorem \ref{thm:alg_error} will continue to hold, namely, results \eqref{eq:result_SS_small_mu} and \eqref{eq:limit_mu_0} for sufficiently small step-sizes.} \qed
\end{remark}
We conclude that the diffusion mapping $x \mapsto w$ is a contraction mapping for sufficiently small step-sizes. By the Banach fixed point theorem \cite[pp.~299--303]{kreyszig1989introductory}, this mapping will have a unique fixed point, $w_\infty$. Observe that this fixed point is \emph{not} $\mathds{1}_N \otimes w^o(\eta)$. However, since we wish to study the rightmost term in \eqref{eq:error_expansion_1}, or, equivalently:
\begin{align}
	\limsup_{i\rightarrow\infty}\ \E\|\mathds{1}_N \otimes w^o(\eta)- \w_{i}\|^2
\end{align}
where $\w_i \triangleq \textrm{col}\{\w_{1,i},\ldots,\w_{N,i}\}$, we will decompose the above squared distance into two parts: (1) the expected squared distance from $w_\infty$ to $\w_{i}$, and (2) the squared distance from $w^o(\eta)$ to $w_\infty$ (the bias of the algorithm):
\begin{align}
	\E\|\mathds{1} \!\otimes\! w^o(\eta)\!-\!\w_{i}\|^2 &= \E\|\mathds{1}\! \otimes\! w^o(\eta)-w_\infty+w_\infty-\w_{i}\|^2\nonumber\\
	&\!\!\!\!\!\!\!\!\!\!\!\!\!\!\!\!\!\!\!\leq 2\mathds{1}_N^\T \E P[\!\w_i\!-\!w_\infty\!]\!+\!\!2\|\mathds{1} \!\otimes\! w^o\!(\eta)\!\!-\!\!w_\infty\!\|^2 \label{eq:triangle_inequality}
\end{align}}
In order to complete the study, we first examine the quantity $\E P[\w_i\!-\!w_\infty]$ and assess the size of the right-most term. 
\subsection{Mean-Square-Distance to Fixed Point}
We introduce the vectors $\phi_{\infty}$, $\psi_{\infty}$, $\zeta_{\infty}$, and the fixed-point $w_\infty$ and their respective blocks $\phi_{k,\infty}$, $\psi_{k,\infty}$, $\zeta_{k,\infty}$, and $w_{k,\infty}$ obtained by letting $x^1 = w_\infty$ in \eqref{eq:psi_to_w}--\eqref{eq:x_to_phi}. The mean-square-error between the iterates $\boldsymbol{\phi}_{k,i-1}$ and $\w_{k,i}$ and their respective limit points in the noiseless recursion are bounded using Jensen's inequality \cite[p.~77]{cvx_book}
\begin{align}
	\E \|w_{k,\infty} - \w_{k,i}\|^2 &\leq \sum_{\ell = 1}^N a_{2,\ell k} \E\|\psi_{k,\infty} - \boldsymbol{\psi}_{\ell,i}\|^2 \label{eq:final_relation_w}\\
	\E \|\phi_{k,\infty} - \boldsymbol{\phi}_{k,i-1}\|^2 &\leq \sum_{\ell = 1}^N a_{1,\ell k} \E\|w_{k,\infty} - \w_{\ell,i-1}\|^2 \label{eq:final_relation_phi}
\end{align}
We also have that 
\begin{align}
	&\E \|\psi_{k,\infty} \!-\! \boldsymbol{\psi}_{k,i}\|^2 \!=\! \E\|\zeta_{k,\infty} \!-\! \boldsymbol{\zeta}_{k,i}\|^2_{\boldsymbol{\Omega}_{k,i}}
\end{align}
where
\begin{align}
	\boldsymbol{\Omega}_{k,i} \triangleq \left(I_M - \int_0^1 \nabla_w^2 p_k(\zeta_{k,\infty} - t (\zeta_{k,\infty}-\boldsymbol{\zeta}_{k,i})) dt\right)^2
\end{align}
But due to \eqref{eq:bound_hessian_p}--\eqref{eq:condition_nu_1}, we have that 
\begin{align}
	\E \|\psi_{k,\infty} \!-\! \boldsymbol{\psi}_{k,i}\|^2  \leq 
\E\|\zeta_{k,\infty} \!-\! \boldsymbol{\zeta}_{k,i}\|^2 \label{eq:final_relation_psi}
\end{align}
when \eqref{eq:condition_nu_1} is satisfied. Moreover, the mean-square-error between $\zeta_{k,\infty}$ and $\boldsymbol{\zeta}_{k,i}$ can be bounded by
\begin{align}
	&\E \|\zeta_{k,\infty} \!-\! \boldsymbol{\zeta}_{k,i}\|^2 \nonumber\\
	&\stackrel{(a)}{=}\! \E\|\phi_{k,\infty} \!-\! \boldsymbol{\phi}_{k,i-1}\|^2_{\boldsymbol{\Sigma_{k,i-1}}} \!\!\!+\!\! \mu^2 \E \|\v_{k,i}(\boldsymbol{\phi}_{k,i-1})\|^2 \nonumber\\
	&\stackrel{(b)}{\leq} \E \|\phi_{k,\infty}-\boldsymbol{\phi}_{k,i-1}\|^2_{\boldsymbol{\Sigma_{k,i-1}}} \!\!\!+\! \mu^2 \left(\alpha \E\|\boldsymbol{\phi}_{k,i-1}\|^2 \!+\! \sigma_v^2\right) \nonumber\\
	&=  \E \|\phi_{k,\infty}-\boldsymbol{\phi}_{k,i-1}\|^2_{\boldsymbol{\Sigma_{k,i-1}}} +\nonumber\\
	&\quad\ \mu^2 \left(\alpha \E\|w^o(\eta)\!-\!\phi_{k,\infty}\!+\!\phi_{k,\infty}\!-\!\boldsymbol{\phi}_{k,i-1}-w^o(\eta)\|^2 \!+\! \sigma_v^2\right) \nonumber\\
	&\leq \E \|\phi_{k,\infty}-\boldsymbol{\phi}_{k,i-1}\|^2_{\boldsymbol{\Sigma_{k,i-1}}} \!\!\!+\! 2\mu^2 \alpha \E\|w^o(\eta)\!-\!\phi_{k,\infty}\|^2+ \nonumber\\
	&\quad\ 2\mu^2 \alpha\|\phi_{k,\infty}\!-\!\boldsymbol{\phi}_{k,i-1}\|^2 \!+\! \mu^2 (2\alpha \|w^o(\eta)\|^2 + \sigma_v^2) \label{eq:zeta_diff}
\end{align}
where step $(a)$ can be obtained via an argument similar to \eqref{eq:mean-value-theorem-Hessian}, step $(b)$ is due to Assumption \ref{ass:noiseModeling} and $\boldsymbol{\Sigma}_{k,i-1} \triangleq (I_M - \mu \H_{k,i-1})^2$, where $\H_{k,i-1}$ is defined as:
\begin{align}
	\H_{k,i-1} \triangleq \int_0^1 \nabla^2_w J_k(\phi_{k,\infty} - t(\phi_{k,\infty} - \boldsymbol{\phi}_{k,i-1})) dt
\end{align}
Now, due to Assumption \ref{ass:Hessian_cost}, we have that $0 \leq \boldsymbol{\Sigma}_{k,i-1} \leq \gamma_k^2 I_M$, where $\gamma_k$ is defined in \eqref{eq:gamma_k}. Furthermore, from \eqref{eq:diff_C1} it is possible to bound $\|w^o(\eta)\!-\!\phi_{k,\infty}\|^2$ using Jensen's inequality:
\begin{align}
	\|w^o(\eta)\!-\!\phi_{k,\infty}\|^2 &\leq \sum_{\ell=1}^N a_{1,\ell k} \|w^o(\eta)\!-\!w_{k,\infty}\|^2
\end{align}
Substituting into \eqref{eq:zeta_diff}, we get 
\begin{align}
	\E \|\zeta_{k,\infty} \!-\! \boldsymbol{\zeta}_{k,i}\|^2 &\leq (\gamma_k^2 + 2\mu^2 \alpha) \E \|\phi_{k,\infty}\!-\!\boldsymbol{\phi}_{k,i-1}\|^2 +\nonumber\\
								&\!\!\!\!\!\!\!\!\!\!\!\!\!\!\!\!\!\!\!\!\!\!\!\!\!\!\!\!\!\!\!\!\!\!\!\!\!\!\!   2\mu^2 \alpha \sum_{\ell=1}^N\! a_{1, \ell k} \|w^o(\eta)\!-\!w_{k,\infty}\|^2 \!\!+\!\! \mu^2 (2\alpha \|w^o(\eta)\|^2\!+\!\sigma_v^2) \label{eq:final_relation_zeta}
\end{align}
Now, combining \eqref{eq:final_relation_w}, \eqref{eq:final_relation_psi}, \eqref{eq:final_relation_zeta}, and \eqref{eq:final_relation_phi}, we obtain the following recursion for $\E P[\w_i\!-\!w_\infty]$:
\begin{align}
	\E P[\w_i\!-\!w_\infty] &\preceq A_2^\T \Gamma A_1^\T \E P[\w_{i-1}\!-\!w_\infty] + \mu^2 b \label{eq:w_i-w_infty}
\end{align}
where $\Gamma\in \mathbb{R}^{N\times N}$ is a diagonal matrix with elements $\gamma_k^2+2\mu^2\alpha$ along the diagonal, and $b\in\mathbb{R}^N$ is defined as
\begin{align}
	b &\triangleq 2 \alpha A_2^\T A_1^\T \E P[\mathds{1}_N\!\otimes\! w^o(\eta)\! -\! w_\infty] \!+\! (\sigma_v^2 \!+\! 2\alpha \|w^o(\eta)\|^2) \mathds{1}_N \label{eq:bv}
\end{align}
We prove in the next section that $b \in O((\mu\eta)^2) + O(1)$ --- see \eqref{eq:bias_final}. Iterating recursion \eqref{eq:w_i-w_infty} we obtain
\begin{align}
\E P[\w_i\!-\!w_\infty] \!&\preceq\! (A_2^\T \Gamma A_1^\T)^i \E P[\w_{0}\!-\!w_\infty] \!+\! \mu^2 \sum_{j=0}^{i-1} (A_2^\T \Gamma A_1^\T)^j b \label{eq:iterated_recur}
\end{align}
Observe that the matrix $A_2^\T \Gamma A_1^\T$ can be guaranteed to be stable for small step-sizes. To see this, we upper-bound the spectral radius by the matrix norm $\|B\|_\infty$, which is the maximum-absolute-row-sum:
\begin{align*}
	\rho(A_2^\T \Gamma A_1^\T) &\leq \|A_2^\T \Gamma A_1^\T\|_\infty \leq \|\Gamma\|_\infty= \max_{1\leq k \leq N} \{\gamma_k^2 + 2\mu^2 \alpha\}
\end{align*}
since $A_1$ and $A_2$ are left-stochastic matrices. We conclude then that the matrix $A_2^\T \Gamma A_1^\T$ is stable when
\begin{align}
	0 \!<\! \mu \!<\! \min_{1\leq k\leq N} \left\{\frac{2\lambda_{k,\min}}{\lambda_{k,\min}^2+2\alpha},\frac{2\lambda_{k,\max}}{\lambda_{k,\max}^2+2\alpha}\right\}
\end{align}
In this case, we have that that, using \eqref{eq:iterated_recur}, the triangle inequality, and the submultiplicative property of induced norms, 
\begin{align}
	&\left\|\limsup_{i\rightarrow\infty} \E P[\w_i\!-\!w_\infty]\right\|_\infty \leq \mu^2\|b\|_\infty \left\|\sum_{j=0}^\infty (A_2^\T \Gamma A_1^\T)^j\right\|_\infty \nonumber\\
	&\leq \mu^2\|b\|_\infty \sum_{j=0}^\infty \|\Gamma\|_\infty^j \leq \mu^2 \|b\|_\infty \sum_{j=0}^\infty (\gamma^2 + 2\mu^2\alpha)^j \nonumber\\
	&= \frac{\mu^2 \cdot \|b\|_\infty}{1-\gamma^2-2\mu^2 \alpha}\label{eq:limsup_MSP1}
\end{align}
where $\gamma$ was defined in \eqref{eq:gamma}. Combining \eqref{eq:gamma}--\eqref{eq:gamma_k}, we have that $\gamma^2$ can be obtained as
\begin{align}
	\gamma^2 \!&=\!\!\! \max_{1\leq k\leq N} \!\!\left\{\!1\!-\!2\mu \lambda_{k,\min} \!\!+\!\! \mu^2 \lambda_{k,\min}^2, \!1\!-\!2\mu \lambda_{k,\max} \!\!+\!\! \mu^2 \lambda_{k,\max}^2\right\}\nonumber\\
			\!&=\! 1\!-\! \mu \!\!\min_{1\leq k\leq N} \!\!\left\{\!2 \lambda_{k,\min} \!\!-\! \mu \lambda_{k,\min}^2, 2 \lambda_{k,\max} \!-\! \mu \lambda_{k,\max}^2\!\right\} \label{eq:gamma_alt_form}
\end{align}
Substituting \eqref{eq:gamma_alt_form} into \eqref{eq:limsup_MSP1}, we obtain
\begin{align}
	&\left\|\limsup_{i\rightarrow\infty} \E P[\w_i\!-\!w_\infty]\right\|_\infty \label{eq:limsup_MSP}\\
	&\leq \frac{\mu \cdot \|b\|_\infty}{\min_{k} \left\{2 \lambda_{k,\min} \!-\! \mu \lambda_{k,\min}^2, 2 \lambda_{k,\max} \!-\! \mu \lambda_{k,\max}^2\right\}-2\mu \alpha} \nonumber
\end{align}
Therefore, using the fact that $\|b\|=O((\mu\eta)^2)+O(1)$, as shown further ahead in \eqref{eq:bias_final}, we conclude that $\limsup_{i\rightarrow\infty} \E P[\w_i\!-\!w_\infty] \in O(\mu)$.

\subsection{Bias Analysis at Small Step-Sizes}
We now examine the dependence of $\|\mathds{1}_N\otimes w^o(\eta)-w_\infty\|^2$ on $\mu$; this term appears in expression \eqref{eq:bv} for $b$. First, we will derive an expression for $\tilde{w}_\infty \triangleq \mathds{1}_N \otimes w^o(\eta) - w_\infty$. For the remainder of this appendix, we will write $w^o\triangleq w^o(\eta)$ in order to simplify the notation. Our arguments will still apply for any $\eta > 0$. Recall that $w_\infty$ is the fixed point for the diffusion strategy in the absence of gradient noise. Therefore, let $i\rightarrow\infty$ in \eqref{eq:diff_C1}--\eqref{eq:diff_C2} in the absence of noise, and introduce the bias vectors $\tilde{w}_{k,\infty} = w^o - w_{k,\infty}$, $\tilde{\phi}_{k,\infty} = w^o - \phi_{k,\infty}$, $\tilde{\zeta}_{k,\infty} = w^o - \zeta_{k,\infty}$, and $\tilde{\psi}_{k,\infty} = w^o - \psi_{k,\infty}$. Subtracting $\phi_{k,\infty}$, $\zeta_{k,\infty}$, $\psi_{k,\infty}$, and $w_{k,\infty}$ from $w^o$ yields,
\begin{subequations}
\begin{align}
\tilde{\phi}_{k,\infty} &= \sum_{\ell = 1}^N a_{1,\ell k} \tilde{w}_{\ell, \infty} \label{eq:phi_err_recur}\\
\tilde{\zeta}_{k,\infty} &= \tilde{\phi}_{k,\infty} + \mu \nabla_w J_k(\phi_{k,\infty}) \label{eq:zeta_k_tilde_infty}\\
\tilde{\psi}_{k,\infty} &= \tilde{\zeta}_{k,\infty} + \mu\eta\nabla_w p_k(\zeta_{k,\infty}) \label{eq:psi_k_tilde_infty}\\
\tilde{w}_{k,\infty} &= \sum_{\ell=1}^N a_{2,\ell k} \tilde{\psi}_{\ell,\infty} \label{eq:w_err_recur}
\end{align}
\end{subequations}
Using the mean-value-theorem \cite[p.~6]{Polyak}, we can write
\begin{align}
	\nabla_w J_k(\phi_{k,\infty}) 
	&= \nabla_w J_k(w^o) - H_{k,\infty} \cdot \tilde{\phi}_{k,\infty}
\end{align}
where
\begin{align}
	H_{k,\infty} \triangleq \int_0^1 \nabla_w^2 J_k(w^o - t \tilde{\phi}_{k,\infty})dt
\end{align}
Therefore, \eqref{eq:zeta_k_tilde_infty} becomes
\begin{align}
	\tilde{\zeta}_{k,\infty} &= \left[I_M - \mu H_{k,\infty}\right]\cdot \tilde{\phi}_{k,\infty} + \mu \nabla_w J_k(w^o)
\end{align}
Similarly, we can obtain for \eqref{eq:psi_k_tilde_infty} that
\begin{align}
	\tilde{\psi}_{k,\infty} 
	&= \left[I_M-\mu\eta Z_{k,\infty}\right]\cdot \tilde{\zeta}_{k,\infty} + \mu\eta\nabla_w p_k(w^o)
\end{align}
where 
\begin{align}
	Z_{k,\infty} &\triangleq \int_0^1 \nabla_w^2 p_k(w^o-t\tilde{\zeta}_{k,\infty})) dt
\end{align}
To proceed, we introduce the extended quantities:
\begin{align*}
	\mathcal{A}_1 &\triangleq A_1 \otimes I_M, \quad\quad\quad\quad\quad\quad\quad\!\! \mathcal{A}_2 \triangleq A_2 \otimes I_M\\
	\mathcal{H}_\infty &\triangleq \textrm{diag}\{H_{1,\infty},...,H_{N,\infty}\}, \quad\!\!
	\mathcal{Z}_\infty \triangleq \textrm{diag}\!\left\{\! Z_{1,\infty},...,Z_{N,\infty}\!\right\} \\
	g^o &\triangleq \textrm{col}\{\nabla_w J_1(w^o),\ldots,\nabla_w J_N(w^o)\}\\
	f^o &\triangleq \textrm{col}\left\{\nabla_w p_1(w^o),\ldots,\nabla_w p_N(w^o)\right\}
\end{align*}
as well as the network error vector $\tilde{w}_\infty = \textrm{col}\{\tilde{w}_{1,\infty},\ldots,\tilde{w}_{N,\infty}\}$. Using these block variables, recursions \eqref{eq:phi_err_recur}-\eqref{eq:w_err_recur} lead to the following expression for $\tilde{w}_{\infty}$:
\begin{equation}
	\!\!\begin{aligned}
	\tilde{w}_\infty \!&=\! \left[\!I_{MN}\!-\!\mathcal{A}_2^\T \!\!\left(I_{MN}\!-\!\mu\eta \mathcal{Z}_\infty\!\right)\!\left(I_{MN}\!-\!\mu \mathcal{H}_\infty\right)\!\mathcal{A}_1^\T\right]^{-1} \!\!\times\\
	&\quad\,\left[\mu \mathcal{A}_2^\T \left(I_{MN}-\mu\eta \mathcal{Z}_\infty \right)g^o + \mu\eta \mathcal{A}_2^\T f^o\right]
	\end{aligned}
	\label{eq:w_infty_bias}
\end{equation}
when the inverse exists.
The matrix is invertible when $\!\mathcal{A}_2^\T \!\!\left(I_{MN}\!-\!\mu\eta \mathcal{Z}_\infty\!\right)\!\left(I_{MN}\!-\!\mu \mathcal{H}_\infty\right)\!\mathcal{A}_1^\T$ is stable. Since the spectral radius of a matrix is upper-bounded by any of its induced norms, we have that
\begin{align}
	\rho(\mathcal{A}_2^\T \!\!&\left(I_{MN}-\mu\eta \mathcal{Z}_\infty\!\right)\left(I_{MN}-\mu \mathcal{H}_\infty\right)\mathcal{A}_1^\T) \nonumber\\
	&\leq \|I_{MN}-\mu\eta \mathcal{Z}_\infty\|_{b,\infty} \cdot \|I_{MN}-\mu \mathcal{H}_\infty\|_{b,\infty}
\end{align}
where $\|\cdot\|_{b,\infty}$ denotes the block-maximum norm \cite{Jianshu_common_wo,book_chapter}. Now, it is sufficient to show that $\|I_{MN}-\mu \mathcal{H}_\infty\|_{b,\infty} < 1$ and $\|I_{MN}-\mu\eta \mathcal{Z}_\infty\|_{b,\infty} \leq 1$. For the former, observe that
\begin{align}
	\|I_{MN} - \mu \mathcal{H}_\infty\|_{b,\infty} = \max_{1\leq k\leq N}\ \{\|I_{MN} - \mu H_{k,\infty}\|_2\}
\end{align}
and due to Assumption \ref{ass:Hessian_cost},
\begin{align}
	(1 \!-\! \mu \lambda_{k,\max}) I_M \!\leq\! I_M \!-\! \mu H_{k,\infty} \leq (1\!-\!\mu \lambda_{k,\min}) I_M
\end{align}
We conclude that $\|I_{MN} - \mu H_{k,\infty}\|_2 \leq \gamma_ k$ where $\gamma_k$ is defined in \eqref{eq:gamma_k} and that $\|I_{MN} - \mu \mathcal{H}_\infty\|_{b,\infty} =  \max_{1\leq k\leq N}\ \gamma_k$. Similarly, using Assumption \ref{ass:Hessian_penalty}, it can be verified that
\begin{align}
\|I_{MN} - \mu \mathcal{Z}_\infty\|_{b,\infty} =  \max_{1\leq k\leq N}\!\max\{1,|1-\mu\eta \lambda_{k,\max}^p|\}
\end{align}
Finally, observe that $\gamma_k < 1$ is satisfied for all $1\leq k \leq N$ when $\mu$ is chosen according \eqref{eq:bound_stability2}. Also, $\max\{1,|1-\mu\eta \lambda_{k,\max}^p|\} \leq 1$ is satisfied for $1\leq k \leq N$ when $\mu\eta$ is chosen according to \eqref{eq:condition_nu_1}.

Comparing \eqref{eq:w_infty_bias} with expression (85) in \cite{Jianshu_diff_wo}, it is clear now how the current set-up is different and leads to additional challenges in the analysis. Observe that expression \eqref{eq:w_infty_bias} contains the additional terms $\mathcal{Z}_{\infty}$ and $f^o$, which are due to the penalty functions. If these terms are set to zero, then \eqref{eq:w_infty_bias} simplifies to expression (85) in \cite{Jianshu_diff_wo}. Moreover, we rewrite \eqref{eq:w_infty_bias} as:
\begin{align}
	\tilde{w}_\infty &=  \left[I_{MN}-\mathcal{A}_2^\T \mathcal{A}_1^\T + \mu \mathcal{A}_2^\T\mathcal{K}_\infty\mathcal{A}_1^\T\right]^{-1} \times\nonumber\\
	&\quad\,\left[ \mu \mathcal{A}_2^\T \left(g^o + \eta f^o - \mu\eta \mathcal{Z}_\infty g^o \right)\right] \label{eq:w_infty_bias2}
\end{align}
where we introduced the matrix:
\begin{align}
\mathcal{K}_\infty &\triangleq  \eta \mathcal{Z}_\infty + \mathcal{H}_\infty - \mu\eta \mathcal{Z}_\infty \mathcal{H}_\infty 
\end{align}
Observe that if $\mathcal{Z}_{\infty}=0$, then $\mathcal{K}_{\infty}$ would be a symmetric matrix, which is the case studied in \cite{Jianshu_diff_wo} in the context of unconstrained optimization. Here, the penalty functions introduce the additional factor $\mathcal{Z}_{\infty}$, in addition to $f^o$ in \eqref{eq:w_infty_bias2}. 

Our goal now is to show that
\begin{align}
\lim_{\mu \rightarrow 0} \frac{\|\mathds{1}\otimes w^o-w_\infty\|}{\mu} = C
\end{align}
for some constant $C$ that may be dependent on $\eta$ (the approximation parameter), but not $\mu$ (the algorithm parameter). To begin with, we introduce the Jordan canonical decomposition of the matrix $A_2^\T A_1^\T = T^{-\T} D T^\T$ so that
\begin{equation}
	\mathcal{A}_2^\T \mathcal{A}_1^\T \!=\! A_2^\T A_1^\T \!\otimes\! I_M \!=\! (T^{-\T} \!\otimes\! I_M) (D \!\otimes\! I_M) (T^\T \!\otimes\! I_M)
\end{equation}
Then, we may re-write \eqref{eq:w_infty_bias2} as
\begin{align}
	\tilde{w}_\infty &= (T^{-\T} \otimes I_M) \left[I_{MN}-D\otimes I_M + \mu E\right]^{-1}\times\nonumber\\
	&\quad\,(T^{\T} \otimes I_M) \left[ \mu \mathcal{A}_2^\T \left(g^o + \eta f^o - \mu\eta \mathcal{Z}_\infty g^o\right)\right] \label{eq:w_infty_bias3}\\
	E &\triangleq (T^\T \otimes I_M) \mathcal{A}_2^\T\mathcal{K}_\infty\mathcal{A}_1^\T (T^{-\T} \otimes I_M) \label{eq:matrix_E}
\end{align}
By Assumption \ref{ass:primitive} we know that $A_2^\T A_1^\T$ is a doubly stochastic and primitive matrix. It follows from the Perron-Frobenius theorem \cite[pp.~730--731]{Papoulis} that $A_2^\T A_1^\T$ has a single eigenvalue at one with all other eigenvalues strictly inside the unit circle. Therefore, we may partition $D$ and $T$ as follows:
\begin{equation}
	D \!=\! \textrm{diag}\{1,D_0\}, \!\!\quad\!\! T^\T \!=\! \textrm{col}\left\{\mathds{1}^\T , T_R\right\},\!\!\quad\!\! T^{-\T} \!=\! [\mathds{1}, T_L]
	\label{eq:decomp_A_1A_2}
\end{equation}
where $D_0$ has a block Jordan structure satisfying $\rho(D_0) < 1$. Substituting \eqref{eq:decomp_A_1A_2} into \eqref{eq:matrix_E}, we can partition $E$ into blocks:
\begin{align}
E_{11} &\triangleq \left(\mathds{1}^\T \otimes I_M\right) \mathcal{A}_2^\T\mathcal{K}_\infty\mathcal{A}_1^\T (\mathds{1}\otimes I_M) \label{eq:E11}\\
E_{12} &\triangleq \left(\mathds{1}^\T \otimes I_M\right)\mathcal{A}_2^\T\mathcal{K}_\infty\mathcal{A}_1^\T (T_L \otimes I_M)\\
E_{21} &\triangleq (T_R \otimes I_M) \mathcal{A}_2^\T\mathcal{K}_\infty\mathcal{A}_1^\T (\mathds{1} \otimes I_M)\\
E_{22} &\triangleq (T_R \otimes I_M) \mathcal{A}_2^\T\mathcal{K}_\infty\mathcal{A}_1^\T (T_L \otimes I_M)
\end{align}
where $E_{ij}$ indicates the $(i,j)$-th block. Substituting into \eqref{eq:w_infty_bias3}:
\begin{align}
&\tilde{w}_\infty = (T^{-\T} \!\otimes I_M) \left[\begin{array}{cc}
\mu E_{11} & \mu E_{12} \\ 
\mu E_{21} & I-D_0\otimes I_M + \mu E_{22}
\end{array} \right]^{-1} \times\nonumber\\
& \left[\!\!\!\begin{array}{c}
 (\mu \mathds{1}^\T \!\!\otimes\!\! I_M) \mathcal{A}_2^\T \left(g^o \!+\! \eta f^o\right) \!-\! \mu^2 \eta (\mathds{1}^\T \!\!\otimes\!\! I_M) \mathcal{A}_2^\T \mathcal{Z}_\infty g^o  \\ 
\mu\cdot (T_R \otimes I_M) \mathcal{A}_2^\T \left(g^o + \eta f^o - \mu\eta \mathcal{Z}_\infty g^o \right)
\end{array} \!\!\!\right] \label{eq:w_infty_bias4}
\end{align}
Furthermore, recalling that $w^o$ is the solution of the minimization problem \eqref{eq:global_cost_approx}, we see that it is the root of
\begin{align}
	(\mathds{1}^\T \otimes I_M) (g^o + \eta f^o) = 0
\end{align}
Using the fact that the matrix $A_2$ is doubly stochastic, expression \eqref{eq:w_infty_bias4} simplifies to
\begin{align}
	\tilde{w}_\infty 
&= \mu\cdot (T^{-\T} \otimes I_M) \left[\!\!\begin{array}{cc}
\mu E_{11} & \mu E_{12} \\ 
\mu E_{21} & I-D_0\otimes I_M + \mu E_{22}
\end{array}\!\! \right]^{-1} \!\!\!\times \nonumber\\
	&\quad\, \left[\begin{array}{c}
- \mu\eta (\mathds{1}^\T \otimes I_M) \mathcal{A}_2^\T \mathcal{Z}_\infty g^o  \\ 
(T_R \otimes I_M) \mathcal{A}_2^\T \left(g^o + \eta f^o - \mu\eta \mathcal{Z}_\infty g^o \right)
\end{array} \right] \label{eq:tilde_w_infty}
\end{align}
Let us denote
\begin{align}
	\!\!\!G \triangleq \!\left[\!\!\begin{array}{cc}
	G_{11} & G_{12} \\ 
	G_{21} & G_{22}
	\end{array}\!\!\right] \!\!=\!\! \left[\!\!\begin{array}{cc}
\mu E_{11} & \mu E_{12} \\ 
\mu E_{21} & I-D_0\otimes I_M + \mu E_{22}
\end{array} \!\!\right]^{-1}\!\!\! \label{eq:matrixG}
\end{align}
Observe that $G$ is invertible since $G^{-1}$ is similar to $I_{MN}-\mathcal{A}_2^\T \mathcal{A}_1^\T + \mu \mathcal{A}_2^\T\mathcal{K}_\infty\mathcal{A}_1^\T$ from \eqref{eq:w_infty_bias2}, which we have already shown to be invertible when \eqref{eq:condition_nu_1} and \eqref{eq:bound_stability2} are satisfied. Then, from \eqref{eq:tilde_w_infty}, $\tilde{w}_{\infty}$ is given by:
\begin{align}
	\tilde{w}_\infty &= \mu (T^{-\T} \otimes I_M) G \left[\begin{array}{c}
	-\mu\eta \cdot p_1\\ 
	p_2
	\end{array} \right]\\
	p_1 &\triangleq (\mathds{1}^\T \otimes I_M) \mathcal{A}_2^\T \mathcal{Z}_\infty g^o  \\ 
	p_2 &\triangleq (T_R \otimes I_M) \mathcal{A}_2^\T \left(g^o + \eta f^o - \eta\mu \mathcal{Z}_\infty g^o \right)
\end{align}
Applying the block inversion formula \cite[p.~48]{laub} to \eqref{eq:matrixG},
\begin{align}
	&\lim_{\mu \rightarrow 0} \frac{\|\tilde{w}_\infty\|}{\mu} = \lim_{\mu \rightarrow 0}  \Bigg\|(T^{-\T} \otimes I_M)\times\nonumber\\
	&\left[\!\!\!\!\begin{array}{cc}
-\eta E_{11}^{-1} p_1 -\mu \eta E_{11}^{-1} E_{12} G_{22} E_{21} E_{11}^{-1} p_1  -E_{11}^{-1} E_{12} G_{22} p_2 \\ 
\mu \eta G_{22} E_{21} E_{11}^{-1}p_1 + G_{22} p_2
\end{array}\!\!\!\!\right]\!\!\Bigg\| \nonumber
\end{align}
But the right-hand-side is constant since the only matrices with dependence on $\mu$ are $G_{22}$ and $p_2$, which satisfy:
\begin{align}
	G_{22,\infty} &\triangleq \lim_{\mu\rightarrow 0} G_{22} = \left(I_{MN} - D_0\otimes I_M\right)^{-1}\\
	p_{2,\infty} &\triangleq \lim_{\mu\rightarrow 0} p_2 = (T_R \otimes I_M) \mathcal{A}_2^\T \left(g^o + \eta f^o\right)
\end{align}
so we have
\begin{align}
	\lim_{\mu \rightarrow 0} \frac{\|\tilde{w}_\infty\|}{\mu} \!&=\! 
\left\|(T^{-\T} \otimes I_M)\! \left[\!\!\begin{array}{cc}
I & -E_{11}^{-1}E_{12} \\ 
0 & I
\end{array} \!\!\right]\! \left[\!\!\begin{array}{c}
-\eta E_{11}^{-1} p_1 \\ 
G_{22,\infty}p_{2,\infty}
\end{array} \!\!\right]\right\| \nonumber\\
&= O(\eta) \nonumber 
\end{align}
We conclude that 
\begin{align}
	\|\tilde{w}_\infty\|^2 \in O((\mu \eta)^2) 	\label{eq:bias_final}
\end{align}
if $\eta = \mu^{-\theta}$ with $0 < \theta < 1$. Therefore, the bias $P[\mathds{1}_N \otimes w^o(\eta) - w_\infty]$ diminishes with $\mu^2$. Since the bias appears in \eqref{eq:limsup_MSP} through the vector $b$ defined in \eqref{eq:bv}, we conclude that $b \rightarrow (\sigma_v^2 \!+\! 2\alpha \|w^o(\eta)\|^2) \mathds{1}_N$ at a rate of $O((\mu\eta)^2)$ and therefore \eqref{eq:limsup_MSP} is $O(\mu)$. The second term of \eqref{eq:triangle_inequality} is, as we just established, $O((\mu \eta)^2)$ We conclude, therefore that 
\begin{align}
\limsup_{i\rightarrow\infty} \E\|\mathds{1}_N \otimes w^o(\eta) - \w_i\| \leq O(\mu) + O((\mu \eta)^2)
\end{align}
which is \eqref{eq:result_SS_small_mu}.  

\ifCLASSOPTIONcaptionsoff
  \newpage
\fi



%
\bibliographystyle{IEEEbib}
\bibliography{refs}


\end{document}